\documentclass[12pt]{article}

\usepackage{amsmath}
\usepackage{amsthm}
\usepackage{amsfonts}
\usepackage{mathrsfs}
\usepackage{stmaryrd}
\usepackage{setspace}
\usepackage{fullpage}
\usepackage{amssymb}
\usepackage{breqn}
\usepackage{enumitem}
\usepackage{bbold}
\usepackage{authblk}
\usepackage{comment}
\usepackage{hyperref}
\usepackage{pgf,tikz}
\usepackage{graphicx}
\usepackage{subcaption}

\bibliographystyle{plain}

\newtheorem{thm}{Theorem}[section]

\newtheorem{lemma}[thm]{Lemma}

\newtheorem{proposition}[thm]{Proposition}
\newtheorem{prop}[thm]{Proposition}

\newtheorem{clm}[thm]{Claim}

\newcommand\ex{\ensuremath{\mathrm{ex}}}

\newcommand\cF{{\mathcal F}}

\newcommand\cH{{\mathcal H}}

\newtheorem*{thm*}{Theorem}
\newtheorem*{prop*}{Proposition}
\newcommand{\ignore}[1]{}

\title{Rainbow Turán problems for a matching and any other graph}
\author{
 \hspace{0.2em}
D\'{a}niel Gerbner\thanks{\small Alfr\'ed R\'enyi Institute of Mathematics. Email:
\small \texttt{gerbner.daniel@renyi.hu}.}\,, Shujing Miao\thanks{\small School of Mathematics and Statistics, and Hubei Key Lab--Math. Sci., Central China Normal University, Wuhan 430079, China. Email:
\small \texttt{sjmiao2020@sina.com}.}}

\date{}

\begin{document}

\maketitle

\begin{abstract}
For a family of graphs $\cF$, a graph is called $\cF$-free if it does not contain any member of $\cF$ as a subgraph. Given a collection of graphs $(G_1,\ldots,G_t)$ on the same vertex set $V$ of size $n$, a rainbow graph on $V$ is obtained by taking at most one edge from each $G_i$. We say that a collection is rainbow $\cF$-free if it contains no rainbow copy of any member of $\cF$. In this paper, we study the maximum values of $min_{i\in [t]}|E(G_i)|$, $\sum_{i=1}^{t}|E(G_i)|$ and $\prod_{i=1}^{t}|E(G_i)|$ among rainbow $\{F,M_{s+1}\}$-free collections $(G_1,\ldots,G_t)$ on $n$ vertices.
\end{abstract}

{\noindent{\bf Keywords}: Tur\'{a}n number, rainbow, matching}

{\noindent{\bf AMS subject classifications:} 05C35}

\section{Introduction}

A topic in extremal graph theory that has attracted a lot of researchers originated in the works of Tur\'an \cite{Tu}. It seeks to determine $\ex(n,\cF)$ for a given family $\cF$ of graphs, which is the largest number of edges in an $n$-vertex graph that does not contain any member of $\cF$ as a subgraph. If $\cF=\{F\}$, we use the notation $\ex(n,F)$ instead of $\ex(n,\cF)$. Tur\'an \cite{Tu} determined $\ex(n,K_k)$ for every $n$ and $k$. There are countless other results, see \cite{fursim} for a survey.

A particular line of research has recently been initiated by Alon and Frankl \cite{af}. They considered forbidding a graph $F$ together with a matching $M_{s+1}$. Their results were extended by the first author \cite{gerbner}. Let $\cF(F)$ denote the family of graphs we can obtain from $F$ by deleting an independent set.

We will consider a rainbow generalization of the Tur\'an problem. We are given a collection of graphs $(G_1,\dots,G_t)$ on the same vertex set $V$. We say that a subgraph of the union of these graphs is \textit{rainbow} if for each edge $uv$ of this subgraph we can choose an $i$ such that $uv\in E(G_i)$ and each $i$ is chosen at most once. We say that a collection is \textit{rainbow $\cF$-free} if there is no rainbow copy of any member of $\cF$. 

Our goal is still to find a collection with many edges, but now there is no unique way to measure the density of our collection. We consider the following three functions.

\begin{itemize}
    \item $\ex_t(n,\cF)$ denotes the largest integer $e$ such that there is a rainbow $\cF$-free collection $(G_1,\dots, G_t)$ on $n$ vertices, such that each $G_i$ has at least $e$ edges.

    \item $\ex_t^\sum(n,\cF)$ is the largest $\sum_{i=1}^t |E(G_i)|$ among rainbow $\cF$-free collections $(G_1,\dots, G_t)$ on $n$ vertices.

        \item $\ex_t^\prod(n,\cF)$ is the largest $\prod_{i=1}^t |E(G_i)|$ among rainbow $\cF$-free collections $(G_1,\dots, G_t)$ on $n$ vertices.
\end{itemize}

In the case $\cF=\{F\}$, again we will use $F$ instead of $\{F\}$ inside the brackets.

The study of $\ex_t(n,\cF)$ was initiated by Aharoni, DeVos, de la Maza, Montejano and Šámal \cite{admms} for triangles. They determined $\ex_3(n,K_3)$ asymptotically. The bound is slightly larger than the trivial lower bound $\ex(n,K_3)$.

The study of $\ex_t^\sum(n,\cF)$ was initiated by Keevash, Saks, Sudakov and Verstra\"ete \cite{kssv}, who considered two trivial lower bounds: $\ex_t^\sum(n,\cF)\ge (|E(F)|-1)\binom{n}{2}$ because we can take complete graphs in $|E(F)|-1$ colors, $\ex_t^\sum(n,\cF)\ge t\ex(n,F)$ because we can take an $n$-vertex $F$-free graph with $\ex(n,F)$ edges in each of the colors. Among many other results, they showed that one of these two bounds is sharp for $\cF=\{K_k\}$ and $n$ sufficiently large.

The study of $\ex_t^\prod(n,\cF)$ was initiated by Frankl \cite{frankl} and continued in \cite{rainb}. Note that these are the only papers we are aware of that deal with $\ex_t^\prod(n,\cF)$. We remark that $\ex_t^\prod(n,\cF)$ has been studied only for $K_3$ and the 4-vertex path $P_4$, with some remarks concerning larger cliques and longer paths in \cite{rainb}. 

More generally, the triples $a_1,a_2,a_3$ such that a collection with $|E(G_i)|\ge a_i$ must contain a rainbow triangle were determined in \cite{fmr} for $n$ sufficiently large. In particular, this determines $\ex_t^\prod(n,K_3)$. See the survey \cite{sww} for more on these and other rainbow problems in graphs and hypergraphs.

Let us note that the expression ``rainbow Turán'' is also used in the literature for another notion \cite{kmsv}, where we consider proper colorings of a graph without a rainbow copy of $F$. In our language, it means that each $G_i$ is a matching, $G_i$ and $G_j$ are edge-disjoint, we do not have a bound on the number of colors, and we want to determine the largest sum of the number of the edges.

A reason to consider forbidding $M_{s+1}$ together with another graph is that there is a large amount of work on rainbow matchings, see e.g., \cite{abchs} and the references in it.

We will always assume $t\ge s+1$ and $t\ge |E(F)|$. Let $F$ be a graph and $p$ be an integer. We define $\mathcal{F}[p]=\{K_{p+1}\}$ if $F$ has no covering of size at most $p$; otherwise $\mathcal{F}[p]=\{F[S] :$ S is a covering of F with $|S|\le p\}$. Let $\cF(F)$ denote the family of graphs we obtain from $F$ by deleting an independent set. Let $F$ be a bipartite graph and let $p(F)$ denote the smallest possible order of a color class in a proper two‐coloring of $F$.

The Erd\H os-S\'os conjecture \cite{erd} states that for any $t$-vertex tree $T$, we have $\ex(n,T)\le (t-2)n/2$. It is known to hold for several classes of graphs, including paths \cite{eg}. We prove analogous versions for $\ex_t(n,\{F,M_{s+1}\})$ of multiple results on $\ex(n,\{F,M_{s+1}\})$ from \cite{gerbner}.

\begin{thm}\label{min} Let $t\ge\max\{|E(F)|,s+1\}$.

    \textbf{(i)} If $F$ is not bipartite and $n$ is sufficiently large, then $\ex_t(n,\{F,M_{s+1}\})=s(n-s)+\ex_t(s,\cF(F))$.

    \textbf{(ii)} If $F$ is bipartite and $p(F)>s$, then $\ex_t(n,\{F,M_{s+1}\})=s(n-s)+ex_t(s,\mathcal{F}[s])$.

    \textbf{(iii)} If $F$ is bipartite, $p(F)\le s$ and $t$ is sufficiently large, then $\ex_t(n,\{F,M_{s+1}\})=(p-1)n+O(1)$.

    \textbf{(iv)} Assume that $F$ is a balanced tree, i.e., $|V(F)|=2p(F)$, 
     and the Erd\H os-S\'os conjecture holds for $F$. If $p(F)\le s$ and $t$ is sufficiently large, then for sufficiently large $n$, we have $\ex_t(n,\{F,M_{s+1}\})=(p-1)(n-p+1)+ex_t(p-1,\mathcal{F}[p-1])$.
\end{thm}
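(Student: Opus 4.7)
The plan is to establish matching lower and upper bounds in each of the four parts. The lower bounds come from a single construction template adapted to each case, while the upper bounds go through a common rainbow-matching structural lemma combined with the specific properties of $F$ assumed in each part.

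For the lower bounds, in parts (i) and (ii) I would fix a set $S\subseteq V$ of $s$ vertices, put a copy of $K_{s,n-s}$ with sides $S$ and $V\setminus S$ in every $G_i$, and overlay on $S$ an optimal rainbow $\cF(F)$-free (in (i)) or $\mathcal{F}[s]$-free (in (ii)) collection $(H_1,\dots,H_t)$, so that $|E(G_i)|=s(n-s)+|E(H_i)|$. A rainbow $M_{s+1}$ would force an edge entirely outside $S$, of which there are none; a rainbow $F$-copy would force its vertices outside $S$ to span an independent set of $F$, so its remaining vertices in $S$ induce a rainbow member of $\cF(F)$ or $\mathcal{F}[s]$ inside $(H_i)$, contradiction. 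Parts (iii) and (iv) use the same template with $|S|=p-1$ (writing $p=p(F)$): take $K_{p-1,n-p+1}$ in each colour, and in (iv) also overlay an optimal rainbow $\mathcal{F}[p-1]$-free collection on $S$. Since $p-1\le s$, the rainbow $M_{s+1}$ analysis carries over; $F$ has no copy in (iii) because $|S|<p$ leaves no room for a colour class of $F$ to fit inside $S$, and in (iv) a rainbow $F$-copy would once more supply a rainbow member of $\mathcal{F}[p-1]$ inside $(H_i)$.

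For the upper bounds, the central tool is a rainbow analogue of the Erd\H os--Gallai extremal description: if $(G_1,\ldots,G_t)$ is rainbow $M_{s+1}$-free with $t\ge s+1$ and $\min_i|E(G_i)|$ exceeds the stated bound, then there is a set $S^*$ of at most $s$ vertices such that in every $G_i$ all but boundedly many edges are incident to $S^*$. I would prove this by taking a maximum rainbow matching $M$ of size $k\le s$, observing that for every colour $j$ unused by $M$ the set $V(M)$ is a cover of $G_j$, and then using the edge-count hypothesis together with an augmenting/exchange argument (unused colours act as spare labels) to shrink the cover from size $2k$ down to size $k\le s$. Once $S^*$ is in place, each $G_i$ contributes at most $|S^*|(n-|S^*|)+\binom{|S^*|}{2}+O(1)$ edges, split into cover-incident edges and stray outside edges. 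For (i), non-bipartiteness of $F$ forces any rainbow $F$-copy to use vertices on both sides of $S^*$ and makes its outside-vertices independent in $F$, so the inside-restriction $(G_i[S^*])$ is rainbow $\cF(F)$-free, giving $s(n-s)+\ex_t(s,\cF(F))$. Part (ii) is the same analysis with $\mathcal{F}[s]$ in place of $\cF(F)$, using that $p(F)>s$ guarantees every cover of $F$ landing in $S^*$ yields a member of $\mathcal{F}[s]$. For parts (iii) and (iv) the assumption $p(F)\le s$ means the cover of size $s$ is loose, and one must shrink it to size $p-1$: when $t$ is large, any would-be cover of size $\ge p$ admits a greedy rainbow embedding of $F$ by mapping the $p$-element side of $F$ into $S^*$ and extending to the other side one vertex at a time, each step using a fresh colour to find a high-degree vertex in $V\setminus S^*$ adjacent to the already-placed vertices. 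This contradiction forces $|S^*|=p-1$ and gives the $(p-1)n+O(1)$ bound of (iii). Part (iv) refines the $O(1)$ term by invoking the Erd\H os--S\'os hypothesis: via the same greedy colour-budget argument one shows that in every $G_i$ the edges outside $S^*$ form an essentially $F$-avoiding subgraph, which Erd\H os--S\'os bounds tightly, while the inside collection on $S^*$ is rainbow $\mathcal{F}[p-1]$-free by the independence argument of (ii) and contributes $\ex_t(p-1,\mathcal{F}[p-1])$.

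The chief obstacle is the rainbow structural lemma together with its sharpening from $|S^*|\le s$ down to $|S^*|=p-1$ in (iii) and (iv). Both steps hinge on using extra colours as spare labels for augmenting a rainbow matching or greedily embedding $F$, which is precisely why the hypothesis that $t$ is large (and at least $s+1$) enters crucially. A secondary delicacy, confined to (iv), is matching the $O(1)$ error exactly: one must verify that the balanced-tree assumption $|V(F)|=2p$ and the Erd\H os--S\'os bound together pin down the stray-edge contribution to the precise value $\ex_t(p-1,\mathcal{F}[p-1])$ rather than a looser $\omega(1)$ term, which requires carefully distinguishing rainbow subgraphs of the inside collection that correspond to covers of $F$ from those that do not.
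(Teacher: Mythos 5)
Your lower bounds coincide with the paper's constructions and are fine, and for \textbf{(i)} and \textbf{(ii)} your overall skeleton (find a small cover, then argue the collection restricted to it is rainbow $\cF(F)$-free, resp.\ $\cF[s]$-free) matches the intended argument; but even there your structural lemma is both unproved in its key step and too weak as stated. ``All but boundedly many edges are incident to $S^*$'' plus an $O(1)$ error term cannot yield the exact equality $s(n-s)+\ex_t(s,\cF(F))$: the actual argument needs that \emph{every} edge of every colour meets a common $s$-set $S$, which is obtained not by a vague ``augmenting/exchange'' cover-shrinking from $2k$ to $k$, but by showing (via the minimum-edge hypothesis and Lemma \ref{strong}) that no colour is strong, hence every colour has at least $s$ vertices of degree at least $n/2s$, and then applying Hall's condition to the colour--vertex incidence structure to force these $s$ high-degree vertices to be the \emph{same} in every colour; after that one still needs the dichotomy that either some colour misses $\binom{s}{2}$ edges between $S$ and its complement (and is then already small enough) or almost all outside vertices are complete to $S$ in every colour, which is what licenses extending a rainbow member of $\cF(F)$ inside $S$ to a rainbow $F$. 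Without these steps your bound degrades to the theorem's value plus an additive constant, which is not the statement.

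The more serious gap is in \textbf{(iii)} and \textbf{(iv)}. Your plan is to ``force $|S^*|=p-1$'': you claim that any cover of size at least $p$ permits a greedy rainbow embedding of $F$ with its $p$-side inside $S^*$. This is false, and the paper's own remark after Theorem \ref{min} essentially exhibits why: cover vertices need not be incident to edges in many colours (in that construction the vertices of $B$ see only $p-1$ colours), so the greedy colour assignment has nothing to draw on. Moreover no statement of the form ``every colour is covered by $p-1$ (or even $s$) vertices up to $O(1)$ edges'' can be true at the edge threshold $(p-1)n+O(1)$: for instance taking $K_{s,n-s}$ in one colour and $K_{p-1,n-p+1}$ in the remaining colours is rainbow $\{F,M_{s+1}\}$-free for any $t$, yet the first colour has no cover of size $p-1$ covering almost all of its edges. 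The theorem only bounds the \emph{minimum} colour, and the colours used by a maximum rainbow matching may legitimately be large; the paper's proof therefore does something quite different: it fixes the vertex set $U$ of a largest rainbow matching (size at most $2s$), introduces heavy edges (edges carrying at least $pq$ colours), bounds the common heavy neighbourhoods of $p$-subsets of $U$ (else a rainbow $K_{p,q}\supseteq F$ appears), partitions the remaining vertices by their heavy neighbourhoods into classes $B_0,B_1,\dots$, and then runs a counting argument over the $t$ colours (using Proposition \ref{starprop}) to show that, once $t$ is large, \emph{some single colour} simultaneously avoids all the bad events and hence has at most $(p-1)n+O(1)$ edges. None of this colour-counting machinery, nor the reason $t$ must be large, appears in your sketch. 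For \textbf{(iv)} the gap is wider still: pinning the exact constant requires the cleaning steps (rerouting the classes $B_i$ and low-degree vertices of $U$ onto a single $A_j$ without decreasing most colours), the claim that no edges join $A_j$ to the exceptional set $U'$, the application of Erd\H{o}s--S\'{o}s only to the $p^2$-heavy edges inside the bounded set $U'$, and the observation that $\ex_k(p-1,\cF[p-1])$ is eventually constant in $k$ so that restricting to the surviving ``good residuary'' colours does not change the extremal value; your proposal acknowledges that the $O(1)$ term is delicate but supplies no mechanism for any of these steps.
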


Let us remark that \textbf{(iii)} does not always hold for smaller $t$. Consider 
$F=K_{p,p}$ and let $A$ be a $(p-1)$-element set, $B$ be an $(s-p+1)$-element set and $C$ be an $(n-s)$-element set. Add all the edges between $A$ and $C$ in every color, and for each vertex $v\in B$, we add all the edges between $v$ and $C$ in $p-1$ colors (to be specified later). The union of the resulting graphs does not contain $M_{s+1}$, and each copy of $K_{p,p}$ contains a vertex of $B$. But those vertices are incident to only $p-1$ colors, while each vertex of a rainbow copy of $K_{p,p}$ is incident to $p$ colors. Thus this construction is rainbow $\{K_{p,p}, M_{s+1}\}$-free. Now we specify the colors picked for vertices $v\in B$. For each edge we assign the $p-1$ colors such that the $t$ colors are divided as evenly as possible. Then each color is assigned to at least $\lfloor (s-p+1)(p-1)/t\rfloor$ vertices, thus altogether there are at least $(p-1+\lfloor (s-p+1)(p-1)/t\rfloor)(n-s)$ edges of each color.

\begin{thm}\label{sum} Let $t\ge\max\{|E(F)|,s+1\}$.

  \textbf{(i)} $\ex_t^\sum(n,\{F,M_{s+1}\})=\ex_s^\sum(n,F)+O(n)$.

  \textbf{(ii)} For $n$ sufficiently large, we have $\ex_t^\sum(n,\{K_3,M_{s+1}\})=\ex_s^\sum(n,K_3)$.
\end{thm}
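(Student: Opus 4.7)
\textit{Lower bounds.} For both parts, take an optimal rainbow $F$-free collection $(G_1,\ldots,G_s)$ with $\sum_{i=1}^{s}|E(G_i)|=\ex_s^\sum(n,F)$ and pad it with $G_{s+1}=\cdots=G_t=\emptyset$. Only $s$ colors carry edges, so no rainbow $M_{s+1}$ can be formed, and rainbow $F$-freeness is preserved; thus $\ex_t^\sum(n,\{F,M_{s+1}\})\ge\ex_s^\sum(n,F)$.

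\textit{Upper bound, part (i).} Given a rainbow $\{F,M_{s+1}\}$-free collection $(G_1,\ldots,G_t)$, fix a maximum rainbow matching $M$ of size $k\le s$, realised by a color set $I\subseteq[t]$ with $|I|=k$ and vertex set $S=V(M)$ of size $2k\le 2s$. For every $j\notin I$, no edge $e$ of $G_j$ can be disjoint from $S$, for otherwise $M\cup\{e\}$ would be a strictly larger rainbow matching contradicting maximality, so $|E(G_j)|\le 2s(n-1)$. The subcollection $(G_i)_{i\in I}$ is itself rainbow $F$-free on $k$ colors, hence $\sum_{i\in I}|E(G_i)|\le\ex_k^\sum(n,F)\le\ex_s^\sum(n,F)$. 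Adding the two estimates: $\sum_i|E(G_i)|\le\ex_s^\sum(n,F)+2s(t-k)(n-1)=\ex_s^\sum(n,F)+O(n)$.

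\textit{Upper bound, part (ii).} Specialising to $F=K_3$, we use the Keevash--Saks--Sudakov--Verstra\"ete formula $\ex_r^\sum(n,K_3)=\max\{2\binom{n}{2},r\lfloor n^2/4\rfloor\}$ for $n$ large. The gap $\ex_s^\sum(n,K_3)-\ex_k^\sum(n,K_3)$ is $\Omega(n^2)$ for all pairs $(s,k)$ with $k<s$ except at the borderline parameters $s\in\{3,4\}$ with $k$ close to $s$; in the generic range the $O(n)$ slack from (i) is immediately absorbed. It remains to handle those borderline small $s$ and the case $k=s$. For both, the plan is a stability step: a rainbow $K_3$-free collection $(G_i)_{i\in I}$ whose sum is within $O(n)$ of $\ex_s^\sum(n,K_3)$ must be structurally close to one of the two extremal configurations, namely $s$ copies of $K_{\lfloor n/2\rfloor,\lceil n/2\rceil}$ with a common bipartition $A\cup B$, or two near-copies of $K_n$ with $s-2$ nearly empty colors. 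In the two-near-$K_n$ case, any edge $uv\in G_j$ with $j\notin I$ can be completed to a rainbow triangle via edges $uw$ and $vw$ drawn from the two near-$K_n$'s, contradicting rainbow $K_3$-freeness. In the bipartite case, an edge $uv\in G_j$ with both endpoints in $A$ (or both in $B$) completes a rainbow triangle through any vertex $z$ in the opposite part via two distinct bipartite colors from $I$; otherwise $uv$ respects the bipartition, and a swap-and-extend argument on $M$ (swap $uv$ into $M$ and reinsert the displaced color using the many disjoint bipartite edges still available in the remaining $I$-colors) produces a rainbow $M_{s+1}$. Either outcome contradicts the hypothesis, so $\sum_{j\notin I}|E(G_j)|=0$ and $\sum_i|E(G_i)|\le\ex_s^\sum(n,K_3)$ holds exactly. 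The main obstacle is the quantitative stability input: we need a stability statement for rainbow $K_3$-free collections strong enough that a sum within $O(n)$ of the optimum already forces one of the two near-extremal structures, uniformly in $n$, including at the small values $s\in\{3,4\}$ where both extremal types sit at the same order of magnitude.
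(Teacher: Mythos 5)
Your lower bound and your proof of part (i) are correct, and part (i) takes a route genuinely different from the paper's. You argue directly from a maximum rainbow matching $M$ of size $k\le s$ with color set $I$: every edge of a color $j\notin I$ must meet $V(M)$ (else $M$ extends), so such colors have $O(n)$ edges each, while the subcollection $(G_i)_{i\in I}$ is rainbow $F$-free and hence has total size at most $\ex_k^\sum(n,F)\le\ex_s^\sum(n,F)$. The paper instead first symmetrizes using the nesting lemma (Lemma~\ref{nestlemma}), then proves the more general inequality $\ex_t^\sum(n,\{F_1,\dots,F_k\})\le\ex_m^\sum(n,\{F_2,\dots,F_k\})+(t-m)\ex(n,F_1)$ with $m=|E(F_1)|-1$, applying it with $F_1=M_{s+1}$: once the graphs are nested, $G_{s+1}$ cannot contain a single monochromatic copy of $M_{s+1}$ (it would then lie in $s+1$ nested colors and could be made rainbow), so $G_{s+1},\dots,G_t$ are all $M_{s+1}$-free. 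Your argument is a bit more self-contained; the paper's yields a cleaner general proposition at essentially the same cost.

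Part (ii), however, has a genuine gap that you yourself flag. Your plan reduces everything to a quantitative stability statement for rainbow $K_3$-free collections, applied uniformly in $n$ and sharp enough to operate at the borderline parameters $s\in\{3,4\}$ and in the case $k=s$ (where the $O(n)$ slack from part (i) is not absorbed by any $\Omega(n^2)$ gap). That stability input is not proved, and it is precisely the hard part: the paper does not use it at all. After nesting, the paper argues case by case in $s$. For $s\le 2$ the bound $|E(G_s)|=O(n)$ (via Lemma~\ref{strong}) already suffices. For $s=3$ it shows that any nonempty $G_3$ forces a vertex whose $G_3$-neighborhood is independent in $G_1$, killing $\Omega(n^2)$ edges. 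For $s=4$ it adapts Frankl's triple-counting argument to show each $G_3$-edge charges $\Theta(n)$ missing edges to $G_2$. For $s\ge 5$ it combines $\ex_{s-1}^\sum(n,K_3)$ with the $O(n)$ bound on $G_s$. Your sketch of what the two extremal structures look like and how an extra edge would create a rainbow triangle or a rainbow $M_{s+1}$ is the right intuition, but as written it does not constitute a proof. (A minor side error: your formula $\ex_r^\sum(n,K_3)=\max\{2\binom{n}{2},r\lfloor n^2/4\rfloor\}$ is wrong at $r=1$, where the value is $\binom{n}{2}$.)
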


Note that $\ex_s^\sum(n,F)$ is obviously quadratic if $F$ has at least two edges, since we can take $K_n$ in one color and the empty graph in the other colors.

Also note that we know $\ex_s^\sum(n,K_3)$ exactly: obviously $\ex_s^\sum(n,K_3)=s\binom{n}{2}$ for $s\le 2$, while $\ex_3^\sum(n,K_3)=n(n-1)$ and $\ex_s^\sum(n,K_3)=s\lfloor n^2/4\rfloor$ for $s\ge 4$. This follows from the result of Keevash, Saks, Sudakov and Verstra\"ete \cite{kssv} mentioned earlier.

Let us turn to $\ex_t^\prod(n,\{F,M_{s+1}\})$. A particular reason to forbid a matching is that we can easily determine the order of magnitude in other cases. If we forbid any graph $F$ that contains a vertex of degree larger than 1, then $t$ vertex-disjoint monochromatic copies of $K_{\lfloor n/t\rfloor}$ of different colors show that $\ex_t^\prod(n,F)=\Theta(n^{2t})$.

We determine the order of magnitude of $\ex_t^\prod(n,\{F,M_{s+1}\})$ for each $F$.
Let $S_r$ denote the star with $r$ leaves.

\begin{thm}\label{prod} Let $t\ge\max\{|E(F)|,s+1\}$.

\textbf{(i)} If $n$ is sufficiently large, then $\ex_t^\prod(n,M_{s+1})=(n-1)^{t-s+1}\binom{n}{2}^{s-1}$.

  \textbf{(ii)} If $F$ is not a star with isolated edges added, then $\ex_t^\prod(n,\{F,M_{s+1}\})=\Theta(n^{t+s-1})$.

  \textbf{(iii)}
    \begin{displaymath}
\ex_t^\prod(n,\{S_r,M_{s+1}\})=
\left\{ \begin{array}{l l}
\Theta(n^{2s-2}) & \textrm{if\/ $r=2$},\\
\Theta(n^{s(r-1)-1}) & \textrm{if\/ $t>s(r-1)$ and $r>2$},\\
\Theta(n^{s(r-1)}) & \textrm{if\/ $t=s(r-1)$},\\
\Theta(n^{t+s-\lceil (t-s)/(r-2)\rceil}) & \textrm{otherwise}.\\
\end{array}
\right.
\end{displaymath}

  \textbf{(iv)}  If $F$ is a star $S_r$ with $1\le m\le s-1$ isolated edges added, then 
  \begin{displaymath}
\ex_t^\prod(n,\{F,M_{s+1}\})=
\left\{ \begin{array}{l l}
\Theta(n^{2s-2}) & \textrm{if\/ $r=2$},\\
\Theta(n^{t+s-1}) & \textrm{if\/ $r-1\ge t-s+1$},\\
\Theta(n^{t+m-1}) & \textrm{if\/ $t\ge s(r-1)$},\\
\Theta(n^{t+m-1}) & \textrm{if\/ $r+s-2<t<s(r-1)$ and $m>\frac{s(r-1)-t}{r-2}$}.\\
\Theta(n^{t+\lfloor \frac{s(r-1)-t}{r-2}\rfloor})  & \textrm{if\/ $r+s-2<t<s(r-1)$ and $m\le\frac{s(r-1)-t}{r-2}$}.\\
\end{array}
\right.
\end{displaymath}
 
\end{thm}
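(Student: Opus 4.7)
For the lower bound, take $G_1 = \cdots = G_{s-1} = K_n$ and $G_s = \cdots = G_t = S_{n-1}$ all centered at a common vertex $v$. A rainbow $M_{s+1}$ would need at least two edges from star colors, but those all contain $v$, contradicting disjointness. The product equals $\binom{n}{2}^{s-1}(n-1)^{t-s+1}$. For the upper bound, the key claim is that for $n$ large, at most $s - 1$ colors satisfy $|E(G_i)| \ge n$; the remaining $t - s + 1$ then contribute at most $n - 1$ edges each (since the extremal $M_2$-free graph for $n \ge 4$ is $S_{n-1}$), and the large colors at most $\binom{n}{2}$ each. To prove the claim, suppose $s$ colors each have $\ge n$ edges, so each has matching number $\ge 2$ by Erd\H os--Gallai; combined with any further non-empty color (which must exist since the product is positive), I extract a rainbow $M_{s+1}$ via a greedy argument or an Aharoni--Berger-type rainbow-matching theorem.

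\textbf{Part (ii).} The upper bound follows immediately from (i). For the lower bound, the (i)-construction is rainbow-$F$-free exactly when every $u \in V(F)$ has $|E(F - u)| \ge s$, so that no embedding of $F$ can place $u$ at $v$ with enough clique-color edges to cover the rest. When this fails, I modify by replacing the $s - 1$ cliques with $s - 1$ structured graphs --- a Tur\'an-type graph on $V \setminus \{v\}$ if $F$ is non-bipartite, or an $F$-free extremal bipartite graph otherwise --- chosen so that rainbow $F$ cannot form. The hypothesis that $F$ is not a star with isolated edges added is exactly what allows such a modification while keeping the product at $\Theta(n^{t+s-1})$.

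\textbf{Parts (iii) and (iv).} For $F = S_r$ or $F = S_r + mK_2$ with $1 \le m \le s - 1$, the (i)-construction contains rainbow $F$ at the hub, so new constructions are needed. Rainbow-$S_r$-freeness (after reducing to edge-disjoint colors) is equivalent to the local condition that every vertex is incident to edges of at most $r - 1$ distinct colors. For $r = 2$ this forces vertex-disjoint color supports up to multi-colored isolated edges, and the optimum $\Theta(n^{2s-2})$ is attained by $s - 1$ vertex-disjoint balanced cliques in distinct colors plus one isolated edge colored in all $t - s + 1$ remaining colors. For $r \ge 3$ the construction uses a small hub set, of size scaling with $\lceil (t - s) / (r - 2) \rceil$, saturated at $r - 1$ colors per hub; the case splits track whether the hub-count, the matching-size $s$, or $t$ is the binding constraint. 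For (iv), the extra $mK_2$ part limits the number of large clique-colors to at most $m - 1$, and the piecewise formula reflects which of the star-degree and matching-size obstructions dominates in each regime.

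The principal obstacle is the rainbow-matching extraction step underlying the upper bound of (i): $|E(G_i)| \ge n$ barely suffices for a matching pair, so one likely needs either a stability argument boosting edge counts to $\gg n$ before extracting, or a direct appeal to a sharp rainbow-matching theorem. The secondary obstacle lies in verifying the tailored constructions of (iii) and (iv) against every possible embedding of $F$ as the parameters $(r, s, t, m)$ vary through the case splits.
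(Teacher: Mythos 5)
Your construction and high-level outline for Part (i) match the paper, but the upper-bound argument has a genuine gap that you partly anticipate yourself. You declare a color ``large'' if $|E(G_i)| \ge n$ and claim that $s$ large colors plus one further non-empty color force a rainbow $M_{s+1}$. This is false: take $G_1 = \cdots = G_t$ all equal to the same graph with matching number $2$ (e.g.\ two disjoint stars), which can easily have $\Omega(n)$ edges. Every color is ``large,'' yet the entire collection is rainbow $M_2$-free, hence rainbow $M_{s+1}$-free. Moreover, your inference ``the remaining $t-s+1$ colors contribute at most $n-1$ edges each, since the extremal $M_2$-free graph is $S_{n-1}$'' conflates the individual graphs $G_i$ being $M_2$-free with the collection being rainbow $M_2$-free; the former is not implied. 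The paper fixes both problems with a sharper notion: a \emph{strong color} is one that extends any rainbow matching of size $\le s$ in the other colors (Lemma \ref{strong} gives the edge threshold $\approx 2sn$). One shows there are at most $s-1$ strong colors, and---this is the structural step you are missing---that the remaining colors must form a rainbow $M_2$-free \emph{collection}, to which Proposition \ref{m2} applies: either there is a common apex vertex (so each such color is a subgraph of a star, giving $\le n-1$ edges) or all but one of them have $\le 4$ edges (killing the product). Without something like this dichotomy you cannot recover the bound $(n-1)^{t-s+1}\binom{n}{2}^{s-1}$.

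Part (ii) as you state it is also not a proof. You observe (correctly) that the same-vertex-set construction of (i) is rainbow-$F$-free only when deleting any vertex of $F$ leaves at least $s$ edges, and then say you would ``modify'' the construction in a Tur\'an-type way for other $F$, but you give no concrete construction and do not verify rainbow-$F$-freeness. The paper instead uses a different, uniformly applicable construction: $s-1$ \emph{vertex-disjoint} monochromatic cliques of order $\lfloor n/2s\rfloor$ in $s-1$ distinct colors, together with a single star on the leftover vertices placed in every color. Any rainbow subgraph of this collection is a disjoint union of at most one edge per clique plus a star, i.e.\ of the form $S_r + mK_2$; since $F$ is assumed not of this form, no rainbow $F$ arises, and the matching bound is automatic since the $s-1$ cliques are monochromatic and disjoint.

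Parts (iii) and (iv) in your proposal are a qualitative sketch rather than a proof. You do not produce the actual constructions for the various regimes (the paper defines $\ell$, $k$ and gives explicit configurations of stars and cliques for each case), nor the upper-bound machinery: the paper introduces \emph{medium colors} (non-strong but with $\ge q'(s,q)$ edges), uses Chv\'atal--Hanson to show each medium color has a vertex of high degree, bounds the number of colors a vertex can be high-degree in by $r-1$ (else a rainbow $S_r$), extracts a rainbow matching from medium colors, and balances the counts of strong/medium/weak colors; and for (iv) it upgrades ``strong'' to ``very strong'' relative to rainbow $S_r + (m-1)K_2$. None of these steps appear in your outline, so as written the argument does not establish any of the case-by-case bounds.
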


Note that if by deleting any vertex of $F$ we obtain a graph with at least $s$ edges, then $\ex_t^\prod(n,\{F,M_{s+1}\})=(n-1)^{t-s+1}\binom{n}{2}^{s-1}$, as shown by the upper bound in \textbf{(i)} and the construction of taking a complete graph $K_n$ in $s-1$ colors and star $S_{n-1}$ on the same vertex set in each of the other colors.

\section{Preliminaries}

Let us first consider the case a matching is forbidden without any other graph. Meshulam, see \cite{ah} showed the following.

\begin{proposition}[Meshulam]\label{meshu}
    $\ex_t(n,M_{s+1})=s(n-s)+\binom{s}{2}$.
\end{proposition}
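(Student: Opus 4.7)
The plan is to treat the two inequalities separately.

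For the lower bound I would give a direct construction. Fix an $s$-element set $S \subseteq V$, and for every $i\in[t]$ put $G_i = H_S$, where $H_S$ is the graph whose edges are precisely those with at least one endpoint in $S$. Then $|E(G_i)| = s(n-s) + \binom{s}{2}$, and since $\bigcup_i G_i = H_S$ has matching number equal to $s$ (every edge is covered by the $s$-vertex set $S$), the collection has no $M_{s+1}$ at all in its union, in particular no rainbow $M_{s+1}$. This gives $\ex_t(n,M_{s+1}) \ge s(n-s) + \binom{s}{2}$.

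For the upper bound I would argue contrapositively: assuming each $|E(G_i)| > s(n-s) + \binom{s}{2}$, I want to produce a rainbow $M_{s+1}$. Let $M = \{e_1,\dots,e_k\}$ be a maximum rainbow matching, using colors $c_1,\dots,c_k$, and assume toward contradiction that $k\le s$. Since $t\ge s+1\ge k+1$, at least one color $c$ is unused, and by the maximality of $M$ every edge of $G_c$ must meet $V(M)$. The refinement uses two unused colors simultaneously: for any distinct unused $c\neq c'$ and any $e_i = x_iy_i\in M$, we cannot have distinct vertices $v,w\in V\setminus V(M)$ with $x_iv\in E(G_c)$ and $y_iw\in E(G_{c'})$, because swapping $e_i$ for $\{x_iv, y_iw\}$ would enlarge $M$. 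A careful accounting of edges of each $G_c$ — those inside $V(M)$ and those between $V(M)$ and $V\setminus V(M)$, constrained by the swap rule summed over all $e_i$ and all pairs of unused colors — should force some unused $G_c$ to have at most $s(n-s) + \binom{s}{2}$ edges, a contradiction.

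The main obstacle is the swap step, because the naive one-color greedy does not work: deleting the $2k$ matched vertices from a single color class can remove up to $2k(n-2k) + \binom{2k}{2}$ edges, which for $k$ close to $s$ is much larger than the slack $|E(G_i)| - s(n-s) - \binom{s}{2}$, so an extension cannot be found inside just one color. Overcoming this requires genuinely using two (or more) unused colors, available thanks to $t\ge s+1$, and tracking the combined incidence structure between $V(M)$ and $V\setminus V(M)$. The cleanest way to make such swaps rigorous is the topological connectivity / matroid-intersection framework of Aharoni and Howard in \cite{ah}; alternatively one can simply quote their rainbow Erd\H{o}s--Gallai theorem to conclude.
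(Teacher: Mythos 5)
The paper does not actually prove Proposition~\ref{meshu}; it is attributed to Meshulam and cited from Aharoni and Howard~\cite{ah}, so there is no in-paper argument to compare against. Your lower bound is correct: taking the graph $H_S$ of all edges meeting a fixed $s$-set $S$ in every color gives $|E(G_i)|=s(n-s)+\binom{s}{2}$, and since $S$ is a vertex cover of the union, no $M_{s+1}$ (rainbow or otherwise) can appear.

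For the upper bound, what you have written is not a proof: the step ``a careful accounting $\ldots$ should force'' is precisely the missing argument, and you flag this yourself. You are right that the naive bound $|E(G_c)|\le 2k(n-2k)+\binom{2k}{2}$ for an unused color $c$ (from every edge of $G_c$ meeting $V(M)$) is too weak when the maximum rainbow matching size $k$ is near $s$, and the two-color swap constraint you formulate is a correct observation. But nothing in the proposal shows that summing this constraint over pairs of unused colors and edges of $M$ actually forces some $|E(G_c)|\le s(n-s)+\binom{s}{2}$, and it is far from clear that a two-color swap count can do so; the known proof of the rainbow Erd\H{o}s--Gallai bound, due to Meshulam and presented in~\cite{ah}, is not an edge-counting argument but goes through the topological connectivity of independence complexes. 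You concede as much by suggesting one simply quote~\cite{ah}. As a reconstruction of the paper's treatment — state the construction, cite the result — that is exactly what the paper does; as a self-contained proof, the upper-bound half is genuinely missing.
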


One can often use a greedy algorithm to find rainbow subgraphs. The basic idea is that if an edge $uv$ in a subgraph $H$ has at least $|E(H)|$ colors, and the rest of $H$ is rainbow, there are at most $|E(H)|-1$ colors are used on the rest of the edges, thus one of the colors of $uv$ makes the whole $H$ rainbow. In particular, if every edge of $H$ is in at least $|E(H)|$ colors, then we can go through the edges of $H$ greedily and pick an available color for each edge to obtain a rainbow $H$. 
Here we formulate two simple statements concerning greedily found rainbow matchings.

\begin{lemma}\label{greed}
\textbf{(i)} Let $M_0$ be a rainbow matching of size $p$ in the collection $(G_1,\dots, G_p)$. For $p<i\le q$, let $v_i$ be distinct vertices not in any edges of $M_0$ such that $v_i$ has degree at least $2q-1$ in $G_i$. Then there is a rainbow matching of size $q$ in the collection $(G_1,\dots, G_q)$.


\textbf{(ii)}    If for each $i\le q$, $G_i$ contains at least $i$ vertices of degree at least $2q-1$, then there is a rainbow matching of size $q$ in the collection $(G_1,\dots, G_q)$.
\end{lemma}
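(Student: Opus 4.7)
The plan is to prove (i) by greedy extension one color at a time, and then obtain (ii) by first selecting distinct high‑degree representatives via Hall's theorem and invoking (i) from scratch.

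For part (i), I would process the colors $i=p+1,\dots,q$ in order, extending a rainbow matching $M_{i-1}$ of size $i-1$ to a rainbow matching $M_i$ by adding an edge incident to $v_i$ in color $i$. The key invariant to maintain is that after step $i$, all future representatives $v_{i+1},\dots,v_q$ remain outside $V(M_i)$; this holds at the base $i=p$ by hypothesis. To perform step $i+1$, the invariant gives $v_{i+1}\notin V(M_i)$, and I must pick a neighbor $u_{i+1}$ of $v_{i+1}$ in $G_{i+1}$ lying outside the forbidden set $V(M_i)\cup\{v_{i+2},\dots,v_q\}$, which has size at most $2i+(q-i-1)=q+i-1\le 2q-2$. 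Since $v_{i+1}$ has degree at least $2q-1$ in $G_{i+1}$, such a $u_{i+1}$ exists. Because $v_{i+1}$ is distinct from all later $v_j$ and $u_{i+1}$ was chosen to avoid them, the invariant carries to $M_{i+1}$, and after step $q$ we have the desired matching.

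For part (ii), let $A_i$ denote the set of vertices of degree at least $2q-1$ in $G_i$, so $|A_i|\ge i$ by hypothesis. I would use Hall's theorem to obtain a system of distinct representatives $v_i\in A_i$: for any $I\subseteq[q]$ with maximum element $k$, one has $\bigl|\bigcup_{i\in I}A_i\bigr|\ge|A_k|\ge k\ge|I|$, so Hall's condition is satisfied. With distinct $v_1,\dots,v_q$ in hand, applying part (i) with $p=0$ and $M_0=\emptyset$ produces the required rainbow matching.

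There is no serious obstacle, but there are two subtleties worth flagging. The first, in (i), is that in the greedy step one must avoid not only the current matching but also the as-yet-unused representatives $v_j$, which is precisely why the hypothesis demands degree $2q-1$ rather than the $2q-2$ one might naively expect from the size of $V(M_i)$ alone. The second, in (ii), is that a direct greedy that chooses $v_i$ at step $i$ from $A_i\setminus V(M_{i-1})$ can fail because $|A_i|\ge i$ may be smaller than $2(i-1)$; this is exactly the point at which Hall's theorem (or an equivalent reordering argument) is necessary to prearrange the representatives before any edges of the matching are committed.
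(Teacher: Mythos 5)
Your proof of part \textbf{(i)} is essentially identical to the paper's: the same single-pass greedy extension, with the same count of at most $2q-2$ forbidden vertices at each step (the paper counts $2p$ matched vertices of $M_0$, at most $q-p-1$ other representatives $v_j$, and at most $q-p-1$ earlier-picked endpoints $u_j$; your $2i+(q-i-1)\le 2q-2$ is the same bound organized differently). For part \textbf{(ii)} you take a genuinely different route: you invoke Hall's theorem to extract a system of distinct representatives $v_i\in A_i$, whereas the paper simply picks the $v_i$ greedily in increasing order of $i$, observing that $|A_i|\ge i$ while at most $i-1$ representatives have been committed so far. Both arguments are correct and both pre-select all representatives before any matching edge is chosen, which, as you rightly flag, is the essential point (an interleaved greedy could fail since $|A_i|\ge i$ need not exceed $2(i-1)$). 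The paper's greedy is more elementary and self-contained; your Hall-based argument is a bit heavier but makes the underlying SDR structure explicit, and it would generalize more readily if the sets $A_i$ were not nested in size.
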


\begin{proof} To prove \textbf{(i)}, for each $i$ we pick one of the neighbors $u_i$ of $v_i$ such that $u_i$ is not among the vertices $v_j$, not in any edges of $M_0$, and not among the vertices already picked. We can pick $u_i$ since we have to avoid at most $2q-2$ other vertices.

To prove \textbf{(ii)}, first we pick distinct vertices $v_i$ with degree at least $2q-1$ in $G_i$, for each $i$. We do this by going through the graphs $G_i$ in increasing order. We can always pick a new vertex, since there are at most $i-1$ vertices picked earlier, and we have at least $i$ choices. Then we apply \textbf{(i)} to complete the proof.
\end{proof}

Given a collection $(G_1,\dots, G_t)$ and an integer $s$, we say that a color $i$ is \textit{strong} if any rainbow matching of size $s'\le s$ avoiding color $i$ can be extended to a rainbow matching of size $s'+1$ with an edge of color $i$.

\begin{lemma}\label{strong}
    \textbf{(i)} If $G_i$ has more than $2s(n-2s)+\binom{2s}{2}$ edges, then $i$ is a strong color.

    \textbf{(ii)} If $G_i$ has at least $s(n-s)$ edges and less than $s$ vertices of degree at least $n/2s$, then $i$ is a strong color.


    \textbf{(iii)} If $G_i$ contains $M_{2s+1}$, then $i$ is a strong color.
\end{lemma}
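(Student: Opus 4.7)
The plan is to handle all three parts via a unified strategy: given a rainbow matching $M$ of size $s' \le s$ whose edges all avoid color $i$, let $S \subseteq V$ be the set of the $2s' \le 2s$ vertices covered by $M$; it then suffices to exhibit an edge of $G_i$ disjoint from $S$, since adjoining such an edge (in color $i$) produces a rainbow matching of size $s'+1$. In each of the three parts I therefore aim to show that the number of edges of $G_i$ meeting $S$ is strictly smaller than $|E(G_i)|$.

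For part \textbf{(i)}, I would observe that the number of edges of any graph incident to a vertex set $T$ of size $k$ is at most $k(n-k) + \binom{k}{2}$, and that the function $k \mapsto k(n-k) + \binom{k}{2}$ is increasing in $k$ on $\{0, 1, \dots, n-1\}$. Applying this with $T = S$ and $k = |S| \le 2s$ yields the bound $2s(n-2s) + \binom{2s}{2}$ on the number of edges of $G_i$ meeting $S$; by hypothesis $|E(G_i)|$ is strictly larger, so some edge of $G_i$ avoids $S$.

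For part \textbf{(ii)}, let $U$ denote the set of vertices of $G_i$ of degree at least $n/(2s)$, so by hypothesis $|U| < s$. The number of edges of $G_i$ incident to $S$ is at most $\sum_{v \in S} d_{G_i}(v) \le |S \cap U|(n-1) + |S \setminus U| \cdot n/(2s)$. Using $|S| \le 2s$ and $|S \cap U| \le s-1$, one obtains the explicit upper bound $(s-1)(n-1) + (s+1)\cdot n/(2s)$, and a short calculation verifies that this is strictly less than the lower bound $s(n-s)$ on $|E(G_i)|$ once $n$ is taken sufficiently large compared with $s$. This inequality is the only substantive computation in the lemma; the rest is bookkeeping.

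Part \textbf{(iii)} is immediate from pigeonhole: a matching $M_{2s+1} \subseteq G_i$ consists of $2s+1$ pairwise vertex-disjoint edges, while $S$ contains at most $2s$ vertices and therefore meets at most $2s$ of those edges, leaving at least one edge of $M_{2s+1}$ vertex-disjoint from $S$ and available to extend $M$. The main (and only real) obstacle in the proof is the edge-counting inequality in part \textbf{(ii)}; parts \textbf{(i)} and \textbf{(iii)} reduce to elementary estimates.
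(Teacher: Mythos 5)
Your proposal is correct and follows essentially the same strategy as the paper's proof: in each part, show that the edges of $G_i$ meeting the at most $2s$ vertices of a rainbow matching $M$ avoiding color $i$ cannot exhaust $E(G_i)$. Parts \textbf{(i)} and \textbf{(iii)} are verbatim the paper's argument. For part \textbf{(ii)}, you bound the edges of $G_i$ incident to $S=V(M)$ directly by $\sum_{v\in S}d_{G_i}(v)\le |S\cap U|(n-1)+|S\setminus U|\cdot n/(2s)$, whereas the paper first counts edges inside $V=V(G)\setminus U$ and then argues that the low-degree vertices of $M$ cannot cover all of them. These are the same idea presented slightly differently; your version is arguably cleaner, and it makes explicit that the inequality needs $n$ large relative to $s$ (the difference $s(n-s)-[(s-1)(n-1)+(s+1)n/(2s)]=(s-1)[n/(2s)-(s+1)]$ is positive for $s\ge 2$ and $n>2s(s+1)$). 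As a side note, the paper's phrase ``there are at least $n$ edges inside $V$'' is imprecise as written (the count is $s(n-s)-(s-1)(n-1)=n-s^2+s-1$), but this does not affect the conclusion for large $n$, and your computation sidesteps the issue. Neither your proof nor the paper's handles the degenerate case $s=1$ separately, where the stated inequalities become tight, but this does not appear to affect how the lemma is applied.
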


\begin{proof}
    Let us consider a rainbow matching $M$ of size $s'$ avoiding color $i$. Then there are at most $2s'$ vertices in $M$, and there are at most $2s'(n-2s')+\binom{2s'}{2}$ edges incident to those vertices. If $G_i$ has more than that many edges, one of them extends $M$ to a larger matching, proving \textbf{(i)}. If $G_i$ contains $M_{2s+1}$, then at least one of the edges of this $M_{2s+1}$ avoids each vertex of $M$, thus extends $M$ to a larger matching, proving \textbf{(iii)}.

    Let $U$ denote the set of vertices of degree at least $n/2s$ in $G_i$, and let $V$ denote the set of other vertices. Since there are at most $(s-1)n$ edges incident to $U$, there are at least $n$ edges inside $V$ in $G_i$. At most $2s$ vertices belong to $M$. Those vertices belonging to $M$ that are in $V$ are incident to less than $n$ edges of color $i$, thus there are other edges of color $i$ inside $V$. Any of them extends $M$ to a larger matching, proving \textbf{(ii)}.

\end{proof}

Keevash, Saks, Sudakov and Verstra\"ete showed (Lemma 2.1 in  \cite{kssv}) that if we are interested in $\ex_t^\sum(n,F)$, we can assume that $G_1\supseteq G_2\supseteq \dots \supseteq G_t$. The proof trivially extends to multiple forbidden subgraphs, we state it in this form below.

\begin{lemma}\label{nestlemma}
    Let $(G_1,\dots,G_t)$ be a rainbow $\cH$-free collection of graphs. Then there exists a rainbow $\cH$-free collection of graphs $(G_1',\dots,G_t')$ satisfying $G_1'\supseteq G_2'\supseteq \dots \supseteq G_t'$, such that for each pair of vertices $u,v$, the same number of graphs contain the edge $uv$ in $(G_1',\dots,G_t')$ as in $(G_1,\dots,G_t)$.
\end{lemma}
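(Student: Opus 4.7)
The plan is to construct $(G_1',\dots,G_t')$ explicitly by a monotone re-bucketing based on edge multiplicities. For each pair of vertices $u,v$, let $c(uv) := |\{i\in [t] : uv\in E(G_i)\}|$ count the original multiplicity. Define
\[
G_i' := \{uv : c(uv) \geq i\}.
\]
The nesting $G_1' \supseteq G_2' \supseteq \dots \supseteq G_t'$ is immediate from the definition, and every edge $uv$ lies in exactly $c(uv)$ of the new graphs, so the per-edge multiplicity is preserved.

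The only nontrivial task is to verify that the new collection is still rainbow $\cH$-free. Suppose for contradiction that some $H\in \cH$ has a rainbow copy in $(G_1',\dots,G_t')$, realized by edges $e_1,\dots,e_k$ with distinct colors $i_1,\dots,i_k$; by construction this means $c(e_l)\geq i_l$ for every $l$. The strategy is to \textbf{recolor the same copy of $H$} so that it becomes rainbow in the original collection. Form the bipartite auxiliary graph $B$ with parts $\{e_1,\dots,e_k\}$ and $[t]$, placing an edge between $e_l$ and $j$ exactly when $e_l\in E(G_j)$; a matching of $B$ saturating the left side produces the desired colors and contradicts the rainbow $\cH$-freeness of $(G_1,\dots,G_t)$.

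To produce this matching I would apply Hall's theorem. Fix $S \subseteq \{e_1,\dots,e_k\}$ of size $s$, and let $e^* \in S$ be the edge whose new color $i_{l^*}$ is the largest among edges in $S$. Since the $i_l$ are distinct elements of $[t]$, among $s$ of them the maximum is at least $s$, so $i_{l^*}\geq s$, and hence $c(e^*)\geq i_{l^*}\geq s$. The neighborhood of $e^*$ in $B$ has size exactly $c(e^*)$, giving $|N_B(S)|\geq |N_B(e^*)|\geq s = |S|$. Thus Hall's condition holds and a system of distinct representatives exists, completing the contradiction.

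I do not foresee a serious obstacle: the construction is canonical, the monotonicity and multiplicity-preservation are formal, and the main content is just the one-line Hall verification above. The promised extension from a single forbidden $F$ to a family $\cH$ is automatic because the argument transports the rainbow copy of the \emph{same} $H\in \cH$ from the new collection back to the original one.
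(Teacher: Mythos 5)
Your proof is correct, but it takes a genuinely different route from the paper's. The paper follows Keevash--Saks--Sudakov--Verstra\"ete and uses an iterative ``uncrossing'' argument: whenever $G_i\neq G_j$, replace them with $G_i\cap G_j$ and $G_i\cup G_j$, check that any rainbow copy of some $H\in\cH$ in the new pair could be recolored into the old pair (since one of its edges lies in $G_i\cap G_j\subseteq G_i$ and another in $G_i\cup G_j$, hence in $G_i$ or $G_j$), and repeat until the collection is nested. That approach is local and conceptually simple, but it needs an implicit termination argument (monotonicity of a suitable potential such as $\sum_i i\,|E(G_i)|$) which the paper glosses over. You instead give a one-shot construction $G_i'=\{uv: c(uv)\ge i\}$, where nesting and multiplicity preservation are immediate, and you verify rainbow $\cH$-freeness directly via Hall's theorem on the edge--color bipartite graph, using the clean observation that the $m$-th smallest of $k$ distinct colors is at least $m$ so $c(e^*)\ge|S|$ for the edge of maximal color in $S$. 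Your route avoids any termination issue and makes the final object explicit; the paper's route avoids Hall's theorem and is perhaps easier to discover. Both are valid, and the rainbow-freeness verification is the genuine content in each.
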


For the sake of completeness, we present the proof below.

\begin{proof}
If $G_i=G_j$ for all $i,j$, then we are done. Suppose there exists $i,j$ such that $G_i\neq G_j$. Consider a collection of graphs $(G_1',\dots,G_t')$ with $G_k'=G_k$ for $k\neq i,j$ and $G_i'=G_i\cap G_j$, $G_j'=G_i\cup G_j$. Then for each pair of vertices $u,v$, the same number of graphs contains the edge $uv$ in $(G_1',\dots,G_t')$ as in $(G_1,\dots,G_t)$. Suppose the collection of graphs $(G_1',\dots,G_t')$ contains a rainbow copy of a graph $H\in\cH$. Then this copy of rainbow $H$ must contain an edge $e\in E(G_i')$ and an edge $e'\in E(G_j')$, since the collection $(G_1,\dots,G_t)$ is rainbow $\cH$-free. We may assume $e'\in E(G_j)$. Note that $e\in E(G_i)$. Then we can color $e$ with color $i$ and $e'$ with color $j$ in $(G_1,\dots,G_t)$. So we find a copy of rainbow $\cH$ in $(G_1,\dots,G_t)$, a contradiction. Continuing the process we obtain a collection $(G_1',\dots,G_t')$ satisfying $G_1'\supseteq G_2'\supseteq \dots \supseteq G_t'$ from $(G_1,\dots,G_t)$ by a finite number of steps. This completes the proof.
\end{proof}

We deal separately with the case of forbidden $M_2$.

\begin{proposition}\label{m2}
    If $(G_1,\dots,G_t)$ is a rainbow $M_2$-free collection of graphs on at least 4 vertices, then either there is a vertex $v$ such that each edge in each color is incident to $v$, or all but at most one of the graphs have at most 4 edges.
\end{proposition}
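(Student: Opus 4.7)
The plan is to assume there is no vertex $v$ incident to every edge of every $G_i$, and deduce that at most one graph $G_i$ has more than four edges. The key structural observation is: if $G_i$ contains two disjoint edges $uv$ and $xy$, then for every $j\ne i$, each edge of $G_j$ must share a vertex with both $uv$ and $xy$ (otherwise a rainbow $M_2$ appears), hence lies in the bipartite graph between $\{u,v\}$ and $\{x,y\}$, giving $|E(G_j)|\le 4$. Therefore if two distinct graphs $G_i, G_j$ both have more than four edges, neither of them contains a matching of size two.

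Next I would invoke the classical fact that any graph whose edges pairwise intersect is either a star or a triangle; since triangles have only three edges, a graph with more than four edges and no $M_2$ must be a star $S_r$ with $r\ge 5$ leaves (in particular $|V|\ge 6$). So, assuming the conclusion fails, we may take $G_i$ and $G_j$ to be such stars with centers $w_i$ and $w_j$ respectively. The argument then splits into two subcases. If $w_i\ne w_j$, fix any leaf $u$ of the $G_i$-star; each of the $\ge 5$ edges $w_j v_k$ of $G_j$ must meet $w_i u$, and since $w_i\ne w_j$ this forces $u=w_j$, $u=v_k$, or $w_i=v_k$. The latter two equalities each hold for at most one $k$, so across $\ge 5$ choices of $k$ we must have $u=w_j$; but then every leaf of the $G_i$-star equals $w_j$, contradicting that there are $\ge 5$ distinct leaves. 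If instead $w_i=w_j=:w$, I claim every edge of every color is incident to $w$: for any graph $G_k$ with $k\ne i$ and any edge $f\in E(G_k)$, $f$ must meet each of the $\ge 5$ star edges at $w$ in $G_i$, and if $w\notin f$ then $f$ would have to contain all $\ge 5$ distinct leaves, impossible; combined with $G_i, G_j$ being stars at $w$, this contradicts the standing assumption.

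The main obstacle is the two-star subcase; it is settled by exploiting that a star with sufficiently many leaves has too many distinct non-center endpoints for any single non-incident edge to be adjacent to all of them, which both rules out distinct centers and pins every stray edge to the common center when they coincide.
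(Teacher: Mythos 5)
Your proof is correct and rests on the same key observation as the paper's: if a single color $G_i$ contains two disjoint edges $uv$ and $xy$, then every edge of any other color must meet both, hence lies among the at most four edges between $\{u,v\}$ and $\{x,y\}$, so $|E(G_j)|\le 4$ for all $j\ne i$. The paper reaches this configuration directly, by noting that any two independent edges in the collection --- wherever they live --- must already lie in a common color $G_1$ (otherwise they form a rainbow $M_2$), and it leaves implicit the complementary case in which no two independent edges exist at all (the union of all colors is then a star or a triangle, giving one of the two alternatives immediately). You treat that residual case head-on: two colors exceeding four edges and containing no internal $M_2$ would each be a star with at least five leaves, and you eliminate both the distinct-center and the shared-center configurations by short counting arguments, the latter contradicting the assumption that no vertex covers all edges. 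The two proofs share the same backbone; yours is somewhat longer but spells out a case the paper glosses over, and it has no gaps.
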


\begin{proof}
    Assume indirectly that there are two independent edges $uv$ and $u'v'$ that each belong to some graphs in the collection. Clearly, they belong to the same graph, say $G_1$. Each edge in another color consists of one of the vertices $u,v$ and one of the vertices $u',v'$, thus there are 4 choices.
\end{proof}

We will use a simple statement about rainbow stars. Note that it would not be difficult to strengthen this, but it is sufficient for our purposes.

\begin{proposition}\label{starprop}
    Let $(G_1,\dots,G_t)$ be a collection of graphs and $v$ be a vertex. If $v$ is not the center of a rainbow $S_p$, then there is a set of at most $p-1$ edges incident to $v$ such that all but $p-1$ colors appear only on those $p-1$ edges among the edges incident to $v$.
\end{proposition}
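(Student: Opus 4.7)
The plan is to argue by maximality. Let $q$ denote the largest integer such that there exist $q$ distinct edges $e_1,\dots,e_q$ incident to $v$ and $q$ distinct colors $c_1,\dots,c_q$ with $e_j\in E(G_{c_j})$ for each $j$; that is, $q$ is the maximum size of a rainbow star centered at $v$. Since $v$ is not the center of a rainbow $S_p$, we have $q\le p-1$.

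Fix such a maximal configuration and let $E^\ast=\{e_1,\dots,e_q\}$, which has size at most $p-1$. Declare the colors $c_1,\dots,c_q$ to be \emph{exceptional}, padding arbitrarily to a set of exactly $p-1$ exceptional colors if $q<p-1$. I claim that every non-exceptional color appears, among edges incident to $v$, only on edges of $E^\ast$. Suppose otherwise: there is a color $c\notin\{c_1,\dots,c_q\}$ and an edge $f$ incident to $v$ with $f\in E(G_c)$ and $f\notin E^\ast$. Then the edges $e_1,\dots,e_q,f$ together with the colors $c_1,\dots,c_q,c$ form a rainbow star centered at $v$ of size $q+1$, contradicting the maximality of $q$. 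Hence the claimed property holds for $E^\ast$ and this choice of $p-1$ exceptional colors.

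There is really no obstacle here; the only thing to be careful about is that $E^\ast$ itself has size only $q$, which may be strictly smaller than $p-1$, so the ``$p-1$ exceptional colors'' in the statement need not all actually occur on edges at $v$; we simply allow any padding. The bound $q\le p-1$ is immediate from the assumption, because any rainbow star of size $\ge p$ would contain a rainbow $S_p$.
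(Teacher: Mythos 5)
Your proof is correct, and it takes a cleaner route than the paper's. The paper sets up the auxiliary bipartite graph $H$ whose two sides are the edges incident to $v$ and the colors occurring on them, then iteratively locates and deletes Hall-condition violators, with a separate minimality argument to certify that each deletion step removes a matching, so that the total number of deleted colors is at most $p-1$; the surviving edges and the deleted colors then supply $E^\ast$ and the exceptional set. You work with the same underlying bipartite structure but appeal only to the single elementary fact that a maximum matching admits no edge between two exposed vertices: a maximum rainbow star at $v$ has size $q\le p-1$ by hypothesis, its edges form $E^\ast$ and its colors the exceptional set, and any non-exceptional color occurring on an edge at $v$ outside $E^\ast$ would immediately augment the matching. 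This collapses the paper's multi-step Hall-violator bookkeeping to one line. Both arguments produce $|E^\ast|\le p-1$ and at most $p-1$ exceptional colors; yours is shorter and, I think, more transparent.
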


\begin{proof}
   Let us consider an auxiliary bipartite graph $H$. Part $A$ consists of the edges incident to $v$ and part $B$ consists of the colors on those edges. The edge $uv$ is joined to the color $i$ if $uv$ has color $i$. A matching of size $p$ in $H$ would correspond to a rainbow $S_p$, thus there is no such matching. If $|A|\ge p$, then by Hall's condition there is a subset $A'$ of $A$ with neighborhood $B'$ smaller than $|A'|$. We pick such a set with the smallest $|B'|$ and delete both $A'$ and $B'$. Afterwards, we repeat this as long as we can. At the end, there is no set violating Hall's condition, thus there are fewer than $p$ vertices remaining in $A$. 
   
   We claim that when we delete $A'$ and $B'$, we delete a matching of size $|B'|$. Indeed, let $H'$ denote $H$ restricted to $A'\cup B'$. If there is no matching covering $B'$ in $H$, then by Halls' condition there is $B''\subset B'$ with neighborhood $A''$ in $H'$ such that $|A''|<|B''|$. Then the neighborhood of $A'\setminus A''$ in $H$ is a subset of $B'\setminus B''$, in particular it has size smaller than $|A'\setminus A''|$. But then we would have deleted $A'\setminus A''$ and its neighborhood instead of $A'$ and $B'$, a contradiction. This implies that we have deleted a matching of size $B'$. We repeat this, and altogether we delete a matching of size at most $p-1$, thus at most $p-1$ colors. This completes the proof.
\end{proof}


\section{Proofs of Theorems}

Let us continue with the proof of Theorem \ref{min} that we restate here for convenience.

\begin{thm*} Let $t\ge\max\{|E(F)|,s+1\}$.

    \textbf{(i)} If $F$ is not bipartite and $n$ is sufficiently large, then $\ex_t(n,\{F,M_{s+1}\})=s(n-s)+\ex_t(s,\cF(F))$.

    \textbf{(ii)} If $F$ is bipartite and $p(F)>s$, then $\ex_t(n,\{F,M_{s+1}\})=s(n-s)+ex_t(s,\mathcal{F}[s])$.

    \textbf{(iii)} If $F$ is bipartite, $p(F)\le s$ and $t$ is sufficiently large, then $\ex_t(n,\{F,M_{s+1}\})=(p-1)n+O(1)$.

 \textbf{(iv)} Assume that $F$ is a balanced tree, i.e., $|V(F)|=2p(F)$, 
     and the Erd\H os-S\'os conjecture holds for $F$. If $p(F)\le s$ and $t$ is sufficiently large, then for sufficiently large $n$, we have $\ex_t(n,\{F,M_{s+1}\})=(p-1)(n-p+1)+ex_t(p-1,\mathcal{F}[p-1])$.     
\end{thm*}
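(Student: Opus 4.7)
The plan is to prove all four parts through a common template: a "hub" construction gives the lower bound, and a structural analysis of a maximum rainbow matching identifies a hub of the correct size for the upper bound, reducing the edges inside the hub to a rainbow Turán problem for $\cF(F)$ or $\mathcal{F}[\cdot]$.

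For the lower bound, fix $S \subseteq V$ of size $s$ in parts (i), (ii) and of size $p-1$ in parts (iii), (iv). In each $G_i$ put all $S$-to-$(V\setminus S)$ edges, and inside $S$ take an optimal rainbow $\mathcal{H}$-free collection, where $\mathcal{H}$ is $\cF(F)$, $\mathcal{F}[s]$, or $\mathcal{F}[p-1]$ for parts (i), (ii), (iv) respectively (in (iii) a bounded residue suffices). Since $V \setminus S$ is independent in the union, $M_{s+1}$ does not fit. A rainbow $F$ would have to meet $S$ in a vertex cover $T$ of $F$: in (i), $V(F)\setminus T$ is independent so $F[T]\in\cF(F)$; in (ii)-(iv), $|T|\leq |S|$ forces $F[T]\in\mathcal{F}[\cdot]$. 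The edges of $F$ inside $T$ would then realize a rainbow member of $\mathcal{H}$ in the inside-$S$ collection (the other edges of $F$ may use any remaining colors because every color appears on every $S$-to-$(V\setminus S)$ edge). In (iii)-(iv) the size $|S|=p-1<p(F)$ also directly blocks $F$, because one bipartition class of $F$ cannot embed in $S$.

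For the upper bound in parts (i) and (ii), apply Lemma \ref{nestlemma} to nest $G_1\supseteq\cdots\supseteq G_t$ and let $M$ be a maximum rainbow matching; $|M|\leq s$ by hypothesis, and by maximality every edge of each $G_i$ with color $i\notin M$ meets $U=V(M)$. Lemma \ref{strong} (iii) shows that colors $i$ with $G_i \supseteq M_{2s+1}$ are strong; if more than $s$ such colors existed, Lemma \ref{greed} would chain them into a rainbow $M_{s+1}$, so most $G_i$ are $M_{2s+1}$-free. For $n$ large, Erd\H os-Gallai stability forces each such $G_i$ to have a vertex cover close to a common set $S$ of size $s$ (using Lemma \ref{strong} (ii) to discard low-degree vertices). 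Then each $G_i$ has at most $s(n-s)$ edges incident to $S$ from outside, plus edges inside $S$, and the inside-$S$ subcollection across all colors must be rainbow $\cF(F)$-free (in (i)) or $\mathcal{F}[s]$-free (in (ii), where $p(F)>s$ forces every cover of any embedded $F$ of size $\leq s$ to yield a member of $\mathcal{F}[s]$); otherwise a rainbow member of the forbidden family, completed freely through the $S$-to-$(V\setminus S)$ edges, gives a rainbow $F$.

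For parts (iii) and (iv), the size-$s$ hub is too permissive because $p(F)\leq s$ lets $F$ fit, so the hub must shrink to $p-1$. This is where large $t$ is used, via Proposition \ref{starprop}: any vertex $v$ that is not the center of a rainbow $S_p$ commits all but a bounded number of its incident colors to at most $p-1$ edges. Iterating, either we find $p$ disjoint rainbow stars $S_p$ or we isolate a hub of size $p-1$ absorbing most high-multiplicity colors. The stars, combined with the hub, realize a rainbow embedding of $F$ whose $p$-class lies on the star centers and whose other class lies on the hub, which for (iv) is made rigorous via the balanced-tree condition $|V(F)|=2p$ plus the Erd\H os-S\'os hypothesis guaranteeing the embedding from an average-degree bound. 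Pinning the hub at size $p-1$, the inside-hub analysis from (i)-(ii) reduces the remaining edges to a rainbow $\mathcal{F}[p-1]$-free collection, yielding (iv) sharply; (iii) only extracts the order of magnitude. The main obstacle is this last step: simultaneously choosing the hub of size $p-1$ and the inside-hub subcollection realizing $\ex_t(p-1,\mathcal{F}[p-1])$, because intermediate hub sizes between $p$ and $s$ must be ruled out by the star-and-matching interplay, which is exactly what forces both $t$ and $n$ to be large.
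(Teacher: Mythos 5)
Your lower bound constructions match the paper's, and the idea that a rainbow $F$ meeting $S$ in a vertex cover $T$ reduces to a rainbow member of $\cF(F)$ or $\mathcal{F}[\cdot]$ inside $S$ is exactly right. The upper bounds, however, have genuine gaps.

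For parts (i) and (ii), the plan to apply Lemma~\ref{nestlemma} and then analyze a maximum rainbow matching does not work for the quantity $\ex_t$. Lemma~\ref{nestlemma} preserves the \emph{sum} $\sum_i |E(G_i)|$, but the operation $(G_i,G_j)\mapsto(G_i\cap G_j, G_i\cup G_j)$ can only decrease the \emph{minimum} $\min_i|E(G_i)|$. An upper bound on the smallest graph $G'_t$ in the nested collection therefore says nothing about $\min_i|E(G_i)|$ in the original collection, which is what $\ex_t$ measures. Nesting is the right tool for $\ex_t^{\sum}$ (Theorem~\ref{sum}) but not here. Relatedly, your claim that ``Erd\H os–Gallai stability forces each $G_i$ to have a vertex cover close to a common set $S$'' is the crux and is not substantiated: individual stability gives each color its own approximate cover of size $s$, but aligning these to a single common $S$ requires an argument. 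The paper does this by first showing there is \emph{no} strong color (one strong color plus a rainbow $M_s$ from the remaining $\ge s$ dense colors via Proposition~\ref{meshu} already gives an $M_{s+1}$), then invoking Lemma~\ref{strong}(ii) to get $\ge s$ vertices of degree $\ge n/2s$ per color, and finally running a Hall-condition argument on the auxiliary bipartite graph of (color, high-degree vertex) pairs: either there is a matching of size $s+1$ (yielding $M_{s+1}$ via Lemma~\ref{greed}(i)), or Hall's condition pins the high-degree vertices of every color to the same $s$-set $S$. This alignment step is the missing idea.

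For parts (iii) and (iv), your description --- ``disjoint rainbow stars $S_p$,'' a hub ``absorbing most high-multiplicity colors,'' a ``star-and-matching interplay'' --- gestures at the difficulty but does not contain the argument. The paper's proof hinges on a quantitative partition: define an edge to be heavy if it appears in at least $pq$ colors, show every $p$-subset of $U=V(M)$ has at most $q-1$ common heavy neighbors (else a greedy rainbow $K_{p,q}$), and partition $V\setminus U$ into $H$ (few vertices), $B_0$ (vertices with $\le p-2$ heavy neighbors), and the sets $B_i$ of vertices whose heavy neighborhood is exactly the $(p-1)$-set $A_i\subset U$. Proposition~\ref{starprop} then controls, per $B_i$, how many colors can contribute more than $(p-1)|B_i| + O(1)$ edges between $U$ and $B_i$, and a color count shows some color $j$ is economical across all $B_i$'s and $B_0$, giving $(p-1)n+O(1)$. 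Part (iv) additionally modifies the collection (merging all $B_i$'s into one $B_j$, rerouting $B_0$ and some vertices of $U$ to $A_j$), proves a claim that no edge runs between $A_j$ and the small exceptional set $U'$, and uses the Erd\H{o}s–S\'os hypothesis only to bound the number of $p^2$-heavy edges \emph{inside} $U'$ by $(p-1)|U'|$ --- not, as you suggest, to ``guarantee an embedding from an average-degree bound.'' None of this bookkeeping appears in your sketch, and it is precisely what makes the exact constant $(p-1)(n-p+1)+\ex_t(p-1,\mathcal{F}[p-1])$ attainable rather than merely the right order of magnitude.
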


\begin{proof} For the lower bound of \textbf{(i)}, we take a copy of $K_{s,n-s}$ in each color, and a rainbow $\cF(F)$-free collection inside the part of order $s$.

    For the upper bound of \textbf{(i)}, consider a rainbow $\{F,M_{s+1}\}$-free collection $(G_1,\dots,G_t)$ such that each $G_i$ has at least $s(n-s)+\ex_t(s,\cF(F))$ edges. First, we claim that there is no strong color. Indeed, the other at least $s$ colors contain a rainbow $M_s$ by Proposition \ref{meshu}, and the strong color would extend that to an $M_{s+1}$. Using Lemma \ref{strong} \textbf{(ii)}, we obtain that in each color there are at least $s$ vertices of degree at least $n/2s$.

    Let us define an auxiliary bipartite graph $H$. Part $A$ consists of the $t$ colors, part $B$ consists of the vertices, and color $i$ is joined to vertex $v$ if $v$ has degree at least $n/2s$ in color $i$. By Lemma~\ref{greed}, a matching of size $s+1$ in $H$ gives a rainbow $M_{s+1}$ in the collection $(G_1,\dots,G_t)$. 

    If there is no matching of size $s+1$ in $H$, then any $(s+1)$-set $A'$ in $A$ is not covered by a matching, thus by Hall's condition there is a set $A''\subseteq A'$ of size at most $s+1$ such that its vertices altogether have less than $|A''|$ neighbors in $B$. Since each vertex of $A$ has at least $s$ neighbors in $B$, this implies that the vertices of $A''$ have the same $s$ neighbors in $B$ and $|A''|=s+1$, i.e., $A''=A'$. This holds for every $(s+1)$-subset of $A$, thus each vertex of $A$ is incident to the same $s$ vertices of $B$.

    This means that there is a set $S$ of $s$ vertices 
    that have degree at least $n/2s$ in each color. We claim that every edge contains at least one of the vertices of $S$. Indeed, if $uv$ is an edge avoiding $S$, we consider it as a rainbow matching of size $1$, take $s$ other colors and apply \textbf{(i)} of Lemma~\ref{greed} to find a rainbow matching of size $s+1$. We obtain that in each $G_i$, we have at most $\binom{s}{2}$ edges inside $S$ and at most $s(n-s)$ edges between $S$ and the rest of the vertices. If any color has at least $\binom{s}{2}$ edges missing between $S$ and the rest of the vertices, we are done. This means that at least $n-s-t\binom{s}{2}$ vertices are joined to each vertex of $S$ in each color. If there is a rainbow graph from $\cF(F)$ inside $S$, then it is extended to a rainbow $F$ by picking the remaining vertices among those at least $n-s-t\binom{s}{2}$ vertices. Therefore, the edges of the graphs $G_i$ inside $S$ must form a rainbow $\cF(F)$-free collection, thus for at least one of the graphs $G_i$ there are at most $\ex_t(s,\cF(F))$ edges inside $S$. Clearly, there are at most $s(n-s)$ other edges in color $i$, completing the proof of \textbf{(i)}. 
    
    For the lower bound of \textbf{(ii)}, we take a copy of $K_{s,n-s}$ in each color, and a rainbow $\mathcal{F}[s]$-free collection inside the part of order $s$. The resulting collection is rainbow $M_{s+1}$, since $s$ vertices are incident to all edges and rainbow $F$-free by the definition of $\mathcal{F}[s]$. 
    
    For the upper bound of \textbf{(ii)}, we can first claim that there exists a set $S$ of s vertices such that every edge contains at least one vertex of $S$ in each color. This assertion can be obtained through the proof process of \textbf{(i)}. Then we obtain that in each $G_i$, we have at most $s(n-s)+\binom{s}{2}$ edges. If any color has at least $\binom{s}{2}$ edges missing between $S$ and $V(G_i)\setminus S$, we are done. This implies that at least $n-s-t\binom{s}{2}$ vertices are joined to each vertex of $S$ in each color. If there is a rainbow graph from $F[s]$ inside $S$, then we can find a rainbow $F$. Thus, the collection restricted to $S$ is rainbow $\mathcal{F}[s]$-free. This means that there are at most $ex_t(s,\mathcal{F}[s])$ edges inside $S$. Note that there are at most $s(n-s)$ edges between $S$ and the rest of vertices in each color, completing the proof of \textbf{(ii)}.

For the lower bound of \textbf{(iii)}, consider a $K_{p-1,n-p+1}$ with each edge in each color.
    To prove the upper bound of \textbf{(iii)}, let us consider a largest rainbow matching $M$ and denote its vertex set by $U$. Then $|U|\le 2s$ and each edge of the at least $t-s$ colors that do not appear in $M$ is incident to $U$. We say that an edge is \textit{heavy} if it appears in at least $pq$ colors, 
    where $q=|V(F)|-p$. For a vertex $u$, its \textit{heavy neighbors} are the vertices outside $U$ that are joined to $u$ by heavy edges. Consider a $p$-element subset $U'\subset U$. We claim that there are at most $q-1$ vertices that are common heavy neighbors of each vertex of $U'$. Indeed, otherwise we found a $K_{p,q}$ with $U'$ being one of the parts, and we can pick greedily distinct colors for the edges, so we find a rainbow $K_{p,q}$.

    We obtained that there are at most $(q-1)\binom{2s}{p}$ vertices outside $U$ that have at least $p$ heavy neighbors in $U$, let $H$ denote their set. Let $A_1,\dots, A_{\binom{|U|}{p-1}}$ be the $(p-1)$-element subsets of $U$. We partition $V(G)\setminus (U\cup H)$ to $B_0, B_1,\dots, B_{\binom{|U|}{p-1}}$, where $B_0$ denotes the set of vertices with at most $p-2$ heavy neighbors in $U$ and for $1\le i\le \binom{|U|}{p-1}$, $B_i$ denotes the set of vertices with heavy neighborhood $A_i$.


    We claim that for each $i$, if $u\not\in A_i$, then in all but $q-1$ colors, at most $q-1$ edges go from $u$ to $B_i$. Indeed, a rainbow star $S_q$ with center $u$ and other vertices in $B_i$ would be extended with $A_i$ to a rainbow $K_{p,q}$, since we can greedily pick the rest of the edges. Thus, we can apply Proposition \ref{starprop}. Let us consider the largest rainbow star with center $u$ and other vertices in $B_i$, then in each color not in the star, any edge between $u$ and $B_i$ goes to the leaves of the star. 
    As there are at most $\binom{2s}{p-1}$ choices for $i$, all but at most $(q-1)\binom{2s}{p-1}$ colors have that for each $i$, at most $(p-1)|B_i|+(2s-p+1)(q-1)$ edges are between $U$ and $B_i$ of that color.

    Let us consider now $v\in B_0$. Clearly, there are at most $(p-2)t+(2s-p+2)(pq-1)$ edges between $U$ and $v$, thus there are at most $|B_0|((p-2)t+(2s-p+2)(pq-1))$ edges between $U$ and $B_0$. Therefore, at most $\frac{(p-2)t+(2s-p+2)(pq-1)}{p-1}$ colors have more than $(p-1)|B_0|$ edges between $U$ and $B_0$. 

    Therefore, if $t>\frac{(p-2)t+(2s-p+2)(pq-1)}{p-1}+s+(q-1)\binom{2s}{p-1}$, then there is a color $j$
    that has all its edges incident to $U$, for each $i$ there are at most $(p-1)|B_i|+(2s-p+1)(q-1)$ edges between $U$ and $B_i$ of color $j$, and there are at most $(p-1)|B_0|$ edges between $U$ and $B_0$ of color $j$. The rest of the edges of color $j$ are inside $U$ or between $U$ and $H$. Therefore there are at most $(p-1)n+O(1)$ edges of color $j$, completing the proof.


The lower bound of \textbf{(iv)} is obtained by taking $K_{p-1,n-p+1}$ in each color, and adding a rainbow $\cF[p-1]$-free collection of graphs in the smaller partite set.

    To prove the upper bound of \textbf{(iv)}, first observe that if an edge $uv$ is $p^2$-heavy (recall that $q=p$ now), then we can assume that $uv$ is $t$-heavy. Indeed, if adding colors to $uv$ would create a rainbow $F$, we could replace the color of $uv$ with a color it originally had such that the copy of $F$ is still rainbow. 
    
    We follow the line of thought of the proof of \textbf{(iii)}. We claim that if $uv$ is an edge in any color, $u\in B_i$ with $i>0$ and $|B_i|\ge p$, then $v\in A_i$. Otherwise, we consider a proper 2-coloring of $F$ with color classes $A$ and $B$ such that part $A$ contains a leaf. We embed the leaf to $v$, its neighbor to $u$, the other vertices of $A$ into $A_i$, and the other vertices of $B$ into $B_i$ arbitrarily. Then all the edges of $F$ are present in the embedding, and all but the single edge incident to $v$ are $p^2$-heavy. Therefore, we can greedily choose colors for those edges to obtain a rainbow $F$, a contradiction.


We claim that there is a $B_j$ with at least $2p$ vertices. Indeed, otherwise $|B_0|=n-O(1)$. Observe that there are at most $t(p-2)+2s(p^2-1)$ edges that are not in the colors of the matching $M$ incident to a vertex of $B_0$, by the definition of $B_0$. Therefore, the total number of edges incident to $B_0$ in those colors is at most $t(p-2)n+2s(p^2-1)n$. This implies that there is a color with $(p-2)n+O(1)$ edges incident to $B_0$. Clearly there are $O(1)$ edges not incident to $B_0$, completing the proof in this case.

    If $|B_j|\ge 2p$, then we can delete all the edges incident to the vertices of a $B_i$ and join the vertices of $B_i$ to the vertices of $A_j$, in each color, where $i,j>0$. Each vertex of $B_i$ is still incident to $p-1$ edges in each color, and if there is a rainbow $F$ in the new graph, that must contain a vertex of $B_i$, which we can replace by an arbitrary unused vertex of $B_j$. Therefore, we can apply this change and assume that there is at most one non-empty $B_j$. Then we delete the edges incident to $B_0$ and join the vertices of $B_0$ to $A_j$ in each color. Then each color has $(p-1)|B_0|$ edges between $U$ and $B_0$. Recall that in the original collection, at most $\frac{(p-2)t+(2s-p+2)(pq-1)}{p-1}$ colors have more than $(p-1)|B_0|$ edges between $U$ and $B_0$. Therefore, the number of edges did not decrease in the other colors. 
    
    Now we consider a vertex $u\in U\setminus A_j$. If there is no rainbow $S_p$ with center $u$,
    then we delete the edges incident to $u$ and join $u$ to $A_j$ in each color. 
    By Proposition \ref{starprop}, the number of edges decreased in at most $p-1$ colors. 

    Let $B_j'$ denote the set of vertices that have neighborhood $A_j$ in all the colors, i.e., the vertices of $B_j$ and the vertices whose neighborhood we changed. Then $|B_j'|=n-O(1)$. The  vertices not in $B_j'\cup A_j$ are the vertices of $H$ and some vertices in $U\setminus A_j$, let $U'$ denote their set. 

    We obtained a new collection where all but $O(1)$ vertices have neighborhood $A_j$ in all the colors. We have $t'\ge t-\frac{(p-2)t+(2s-p+2)(pq-1)}{p-1}-2s(p-1)$ colors such that the number of edges of these colors is not smaller than in the original collection. We call these the \textit{residuary colors}. 

    \begin{clm}
         There is no edge $uv$ with $u\in A_j$ and $v\in U'$ in any of the colors.
    \end{clm}
    \begin{proof}[Proof of Claim]
        We follow the proof of the analogous statement (Claim 2.1) in \cite{gerbner}. It was observed there that $F$ has a vertex $x$ that is adjacent to at least one and at most $p-1$ leaves and to exactly one vertex $y$ of degree greater than one. Let $B$ be the color class of $x$ in the proper coloring and $A$ be the other color class. Then we embed $x$ to $v$ and $y$ to $u$. Recall that $v$ is the center of a rainbow $S_p$, either because there are at least $p$ heavy edges incident to $v$, or because $v\in U$ and we did not change its neighborhood. If $u$ is in the rainbow star, we embed the leaf neighbors of $x$ to the other leaves of the star such that at least one of them is embedded to a vertex $w$ outside $A_j$, and pick the colors of the edges from the rainbow star. If $u$ is not in the star, we pick one of the available colors for the edge $uv$, and at most one edge $vw'$ of the star has this color. Then there is at least one other leaf $w$ in the star outside $A_j$. We embed the other leaves adjacent to $v$ and the incident edges into this star without using $w'$, but using $w$. Afterwards, we embed the rest of $A$ to $A_j$, and the rest of $B$ to $B_j'$ arbitrarily. This is doable, since $A_j$ has $p-1$ vertices, $A$ has $p$ vertices, and at least one of them is embedded to $w\not\in A_j$. This way we found a copy of $F$ where two edges already have distinct colors, and the rest of the edges are $t$-heavy, thus we can greedily pick their color and obtain a rainbow $F$, a contradiction.
    \end{proof}

    Let us return to the proof of the theorem. Observe that $|U'|$ does not depend on $t$. Consider the edges inside $U'$ that are $p^2$-heavy. If there is a copy of $F$ formed by such edges, then we can greedily find a rainbow $F$, a contradiction. Therefore, there are at most $\ex(|U'|,F)\le (p-1)|U'|$ such edges (here we use that the Erd\H os-S\'os conjecture holds for $F$).
    
    Consider the edges inside $U'$ that are not $p^2$-heavy. There are at most $(p^2-1)\binom{|U'|}{2}$ colors on those edges. Therefore, $t''\ge t'-(p^2-1)\binom{|U'|}{2}$ residuary colors do not appear on those edges. We call those colors \textit{good residuary colors}. In those colors,
    there are at most $(p-1)|U'|$ edges inside $U'$. 
    
    We claim that there is no rainbow graph from $\cF[p-1]$ inside $A_j$ in the good residuary colors. Indeed, such a graph could be extended with vertices from $B_j'$ to a copy of $F$, and the new edges are between $A_j$ and $B_j'$, thus $p^2$-heavy. We keep the colors of the edges inside $A_j$ and choose colors greedily for the rest of the edges, thus we obtain a rainbow copy of $F$, a contradiction.

    We obtained that there are at most $\ex_{t''}(p-1,\cF[p-1])$ edges in one of the good residuary colors inside $A_j$. We claim that $\ex_{t''}(p-1,\cF[p-1])=\ex_{t}(p-1,\cF[p-1])$ if $t''$ is sufficiently large. By definition, $\ex_{k}(p-1,\cF[p-1])$ monotone decreases as $k$ increases. Since it can take at most $\binom{p-1}{2}$ values, at one point it becomes constant, in other words there is a $c=c(p,F)$ such that for every $k>c$, $\ex_{k}(p-1,\cF[p-1])=\ex_{c}(p-1,\cF[p-1])$.
    
    
   Therefore, if $t''>c(p,F)$, there is a good residuary color with at most $\ex_{t}(p-1,\cF[p-1])$ edges inside $A_j$, exactly $(p-1)|B_j'|$ edges between $A_j$ and $B_j'$, and at most $(p-1)|U'|$ edges inside $U'$, altogether at most $(p-1)(n-p+1)+\ex_{t}(p-1,\cF[p-1])$ edges, completing the proof.
\end{proof}

Theorem \ref{sum} \textbf{(i)} follows from the more general statement below.

\begin{proposition} Let $m=|E(F_1)|-1$. Then
    $\ex_t^\sum(n,\{F_1,\dots,F_k\})\le \ex_m^\sum(n,\{F_2,\dots,F_k\})+(t-m)\ex(n,F_1)$.
\end{proposition}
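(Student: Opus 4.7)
The plan is to combine Lemma \ref{nestlemma} with a clean two-part split of the colors. First, I would apply Lemma \ref{nestlemma} to the given rainbow $\{F_1,\dots,F_k\}$-free collection to assume $G_1 \supseteq G_2 \supseteq \cdots \supseteq G_t$, without changing the total edge count.

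Next, I would observe that $G_{m+1}$ must be $F_1$-free as an ordinary graph. Indeed, any copy of $F_1$ in $G_{m+1}$ is also present in each of $G_1,\dots,G_m$, and since $|E(F_1)|=m+1$ we may assign the colors $1,2,\dots,m+1$ bijectively to its edges, producing a rainbow $F_1$. Because $G_i \subseteq G_{m+1}$ for every $i \ge m+1$, this forces $|E(G_i)| \le \ex(n,F_1)$ for all such $i$, so the last $t-m$ colors contribute at most $(t-m)\ex(n,F_1)$ to the sum.

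For the first $m$ colors, I would argue that the sub-collection $(G_1,\dots,G_m)$ is rainbow $\{F_2,\dots,F_k\}$-free, since any rainbow copy of some $F_j$ with $j\ge 2$ in the sub-collection is also a rainbow copy in the original collection. It is automatically rainbow $F_1$-free as well, because $|E(F_1)|=m+1>m$ exceeds the number of available colors. Hence $\sum_{i=1}^m |E(G_i)| \le \ex_m^\sum(n,\{F_2,\dots,F_k\})$, and adding the two bounds yields the inequality.

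There is no serious obstacle here; the only subtle point is recognizing that the nesting step is exactly what decouples $F_1$ from the rest: after nesting, the smallest graph among any prefix of length $m+1$ must be $F_1$-free by itself, which is strictly stronger than what the rainbow condition gives for a generic (non-nested) collection.
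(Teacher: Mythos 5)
Your proof is correct and follows essentially the same route as the paper: apply Lemma~\ref{nestlemma} to nest the graphs, conclude $G_{m+1}$ (and hence $G_i$ for $i>m$) is $F_1$-free, and bound the first $m$ colors by $\ex_m^\sum(n,\{F_2,\dots,F_k\})$. The paper's write-up is terser, but the logic is identical; your added remarks (why the sub-collection is automatically $F_1$-free, and why nesting is the key decoupling step) are sound though not strictly needed.
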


\begin{proof}
    Let $(G_1,\dots,G_t)$ be a rainbow $\{F_1,\dots,F_k\}$-free collection. We can assume by Lemma \ref{nestlemma} that $G_1\supseteq G_2\supseteq \dots \supseteq G_t$. If $G_{m+1}$ contains $F_1$, then $m+1$ colors contain the same copy of $F_1$, and we can assign distinct colors to each edge of that copy, giving us a rainbow copy of $F_1$, a contradiction. Therefore, while obviously $G_1,\dots,G_m$ do not contain rainbow copies of $F_2, \dots, F_k$, the other colors each contain at most $\ex(n,F_1)$ edges.
\end{proof}

Let us continue with the proof of \textbf{(ii)} of Theorem~\ref{sum} that we restate here for convenience.

\begin{thm*}
  For $n$ sufficiently large, we have $\ex_t^\sum(n,\{K_3,M_{s+1}\})=\ex_s^\sum(n,K_3)$.   
\end{thm*}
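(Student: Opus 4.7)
The lower bound takes an extremal rainbow $K_3$-free collection on the first $s$ colors (achieving $\ex_s^\sum(n,K_3)$ edges) and leaves the remaining $t-s$ colors empty; only $s$ colors contain edges, so no rainbow $M_{s+1}$ can occur.

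For the upper bound, apply Lemma~\ref{nestlemma} to assume $G_1\supseteq G_2\supseteq\cdots\supseteq G_t$. Then $G_3$ is $K_3$-free (a triangle in $G_3$ colored $1,2,3$ is a rainbow $K_3$) and $G_{s+1}$ is $M_{s+1}$-free (an $M_{s+1}$ in $G_{s+1}$ colored $1,\ldots,s+1$ is a rainbow copy), so $G_{s+1}$ is $\{K_3,M_{s+1}\}$-free. A short argument (each vertex outside a maximum matching $M$ of a triangle-free graph has at most $|M|$ neighbors in $V(M)$, plus Mantel inside $V(M)$) gives $|E(G_{s+1})|\le s(n-s)$ for $n\ge 2s$. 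If $G_{s+1}=\emptyset$, then $G_i=\emptyset$ for all $i>s$ by nestedness and $\sum_i|E(G_i)|\le\ex_s^\sum(n,K_3)$ follows from rainbow $K_3$-freeness of the first $s$ colors. Otherwise pick $uv\in G_{s+1}$; the subcollection $(G_1|_{V\setminus\{u,v\}},\ldots,G_s|_{V\setminus\{u,v\}})$ must be rainbow $M_s$-free (adjoining $uv$ would yield rainbow $M_{s+1}$), so by Proposition~\ref{meshu} some $G_j|_{V\setminus\{u,v\}}$ with $j\in[s]$ has at most $(s-1)(n-s-1)+\binom{s-1}{2}=O(n)$ edges, forcing $|E(G_j)|=O(n)$. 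Iterating with a matching $M'\subseteq G_{s+1}$ of size $r$ (and applying Meshulam to every $(s+1-r)$-subset of $[s+1]$), at least $r+1$ of the first $s+1$ colors have restriction to $V\setminus V(M')$ of size $O(n)$.

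For $s\ge 5$ this yields a saving of $\Omega(n^2)$ in $\sum_{i\le s}|E(G_i)|$ versus the rainbow-$K_3$ bound (since $\ex_s^\sum(n,K_3)-\ex_{s-1}^\sum(n,K_3)=\lfloor n^2/4\rfloor$ absorbs the drop of one color to $O(n)$), easily dominating the tail $\sum_{i>s}|E(G_i)|\le(t-s)\,s(n-s)=O(tn)$. The main obstacle is the small cases $s=3,4$, where consecutive $\ex_k^\sum(n,K_3)$ differ by only $O(n)$ and reducing a single color is not enough. For $s=3$ (extremal $K_n,K_n,\emptyset$) the plan is to argue directly that for every $uv\in G_3$ and every common $G_1$-neighbor $w$ of $u,v$, both $uw$ and $vw$ lie in $G_1\setminus G_2$ (else a rainbow $K_3$ in colors $3,1,2$), making $|E(G_2)|$ drop by enough to compensate for $\sum_{i>3}|E(G_i)|\le(t-3)|E(G_3)|$ (which is already limited by nestedness $|E(G_i)|\le|E(G_3)|$ for $i\ge 3$). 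For $s=4$ (extremal $s$ copies of $K_{\lfloor n/2\rfloor,\lceil n/2\rceil}$) a stability version of the rainbow-$K_3$ extremal result is used: if $\sum_{i\le 4}|E(G_i)|$ is near $4\lfloor n^2/4\rfloor$ the $G_i$'s share an approximate balanced bipartition, and any edge of $G_5$ then either creates a rainbow triangle (if it lies inside a part) or extends to a rainbow $M_5$ by choosing four disjoint bipartite edges from $G_1,\ldots,G_4$ (if it crosses the parts). The case $s\le 2$ follows from Proposition~\ref{m2} and direct counting.
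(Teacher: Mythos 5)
Your skeleton matches the paper's: reduce to a nested collection via Lemma~\ref{nestlemma}, treat the case $G_{s+1}=\emptyset$ trivially, and otherwise bound the tail colors. The argument you give via Meshulam for $\sum_{i\ge s}|E(G_i)|=O(tn)$ is sound and, for $s\ge 5$, immediately gives $\sum_{i\le s-1}|E(G_i)|\le\ex_{s-1}^\sum(n,K_3)=(s-1)\lfloor n^2/4\rfloor$ and hence the result; the $s\le 2$ case is likewise routine. These parts are correct and essentially the same as the paper's treatment.

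The real content of the theorem, however, lives in the cases $s=3$ and $s=4$, where the gap between $\ex_{s-1}^\sum(n,K_3)$ and $\ex_s^\sum(n,K_3)$ is only $O(n)$ (or even zero, since $\ex_3^\sum(n,K_3)=\ex_2^\sum(n,K_3)=n(n-1)$), so showing one color is small does nothing. Here your proposal only gestures at arguments. For $s=3$ you note that if $uv\in G_3$ and $w$ is a common $G_1$-neighbor of $u,v$ then $uw,vw\notin G_2$; this is true, but it gives no quantitative saving without control over how many common neighbors exist, and you give none. In the worst case $u,v$ share few $G_1$-neighbors, and your claim yields almost no deficit in $|E(G_2)|$, while $\sum_{i>3}|E(G_i)|$ can still be as large as $(t-3)|E(G_3)|$. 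The paper has to work much harder here: it observes that for \emph{every} $w$ at most one of $uw,vw$ is in $G_2$ (a stronger, more usable fact), and then splits into cases according to whether $G_3$ contains an $M_5$ (which forces $G_4=\emptyset$ and reduces to the three-color problem) and, if not, according to the maximum degree in $G_3$ (a bounded vertex cover of $G_3$ forces a high-degree vertex $v$, whose $\Omega(n)$ neighbors span $\Omega(n^2)$ non-edges of $G_1$). None of this is in your sketch, and without it the $s=3$ case is open. For $s=4$ you invoke ``a stability version of the rainbow-$K_3$ extremal result''; this is neither proved in your argument nor cited, and it is not what the paper does. The paper instead adapts Frankl's counting argument (each vertex triple carries at most $6$ colors across $G_1,G_2,G_3$), combined with a dichotomy on whether $G_3$ has a vertex of degree $\ge n^{3/4}$. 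Unless you can supply or cite the stability statement you need, and spell out how cross-part edges of $G_5$ extend to a rainbow $M_5$ even when the bipartitions of $G_1,\dots,G_4$ only approximately agree, the $s=4$ case is also a genuine gap.

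So: lower bound, $s\le 2$, and $s\ge 5$ are fine and track the paper; $s=3$ and $s=4$ are missing the actual arguments, which are where the work of this theorem is done.
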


\begin{proof} Let $(G_1,\dots,G_t)$ be a rainbow $\{K_3,M_{s+1}\}$-free collection. By Lemma~\ref{nestlemma} we can assume that for each $i\le t-1$, $G_i$ contains $G_{i+1}$ as a subgraph.

    We are done if $G_{s+1}$ is empty. If $G_{s+1}$ is not empty, then we cannot have that $1,\dots,s$ are each strong colors, in particular by the nested property, $s$ is not a strong color. By Lemma~\ref{strong}, there are less than $2sn$ edges in $G_s$, thus also in $G_i$ for $i>s$. Therefore, we have $\sum_{i=s}^t|E(G_i)|\le 2tsn$, while clearly $\sum_{i=1}^{s-1}|E(G_i)|\le (s-1)\binom{n}{2}$. This completes the proof in the case $s\le 2$.

    If $s=3$, then we are done if $G_3$ is empty. Otherwise, for every edge $uv$ of $G_3$ and each vertex $w$ other than $u$,v, we have that either $uw$ or $vw$ is not in $G_2$. In particular, $|E(G_2)|\le\binom{n}{2}-(n-2)$. If there are at most $n-2$ edges in the graphs $G_i$ with $i\ge 3$, we are done. Otherwise, there are at least $(n-2)/t$ edges in $G_3$. If there is an $M_5$ in $G_3$, then $G_4$ must be empty, since any edge of $G_4$ would avoid an $M_3$ from $G_3$. Thus, we only have to avoid a rainbow triangle in $(G_1,G_2,G_3)$ and we are done by the results on $\ex_t^\sum(n,K_3)$ mentioned earlier. 
    Therefore, we can assume that there is no $M_5$ in $G_3$. Then
    we have that the set $S$ of at most 8 vertices of a largest matching in $G_3$ are incident to each edge of $G_3$. Therefore, there is a vertex $v$ that is incident to at least $(n-2)/8t$ edges of $G_3$. Then the at least $\binom{(n-2)/8t}{2}=\Omega(n^2)$ pairs of the neighbors of $v$ do not form edges of $G_1$. Therefore, $|E(G_1)|+|E(G_2)|\le 2\binom{n}{2}-\Omega(n^2)$. Since the other graphs $G_i$ with $i\ge 3$ have $O(n)$ edges, we are done.

    If $s=4$, we will use the idea of Frankl \cite{frankl} from the proof of $\ex_3^\sum(n,K_3)=n(n-1)$. It goes by showing that for each set of three vertices, the sum of the number of colors on the edges between these vertices is at most 6. Adding this up for each triple, every edge is counted exactly $n-2$ times, and we get an upper bound $6\binom{n}{3}$. Let us assume now that $G_3$ contains a vertex $v$ of degree at least $n^{3/4}$. Let $U$ denote the set of neighbors of $v$. Then the edges inside $U$ are not in any $G_i$. When we follow Frankl's proof for the colors 1,2,3, we can see that for the $\binom{n^{3/4}}{3}$ triples inside $U$, the number of colors is 0. Therefore, the total sum is at most $6\binom{n}{3}-6\binom{n^{3/4}}{3}$. Divided by $n-2$, we obtain that $|E(G_1)|+|E(G_2)|+|E(G_3)|\le n(n-1)-cn^{5/4}$ for some constant $c>0$. Since there are $O(n)$ edges in the other graphs, we are done. Let us assume that each vertex has degree less than $n^{3/4}$ in $G_3$ and consider an arbitrary edge $uv$ of $G_3$. For each vertex $w$, we have that either $uw$ or $vw$ is not in $G_2$. We found for every edge of $G_3$ at least $n-2$ missing edges of $G_2$. Each such edge $uw$ is counted at most $2n^{3/4}$ times, since there are at most $2n^{3/4}$ edges of $G_3$ incident to $u$ or $v$. Therefore, at least $(n-2)|E(G_3)|/2n^{3/4}$ edges are missing from $G_2$. In other words, there are at most $n(n-1)-(n-2)|E(G_3)|/2n^{3/4}$ edges in $G_1$ and $G_2$, while there are at most $t|E(G_3)|$ edges in the other  graphs, completing the proof.

    If $s\ge 5$, then we have that $|E(G_1)|+|E(G_2)|+|E(G_3)|+|E(G_4)|\le 4\lfloor n^2/4\rfloor$, thus $|E(G_4)|\le \lfloor n^2/4\rfloor$, therefore $|E(G_i)|\le \lfloor n^2/4\rfloor$ for each $i\ge 4$. These imply the claimed bound \linebreak $\sum_{i=1}^t|E(G_i)|\le t\lfloor n^2/4\rfloor$.

    \end{proof}

Let us continue with the proof of Theorem~\ref{prod} that we restate here for convenience.

\begin{thm*} Let $t\ge\max\{|E(F)|,s+1\}$.

\textbf{(i)} If $n$ is sufficiently large, then $\ex_t^\prod(n,M_{s+1})=(n-1)^{t-s+1}\binom{n}{2}^{s-1}$.

  \textbf{(ii)} If $F$ is not a star with isolated edges added, then $\ex_t^\prod(n,\{F,M_{s+1}\})=\Theta(n^{t+s-1})$.

  \textbf{(iii)}
    \begin{displaymath}
\ex_t^\prod(n,\{S_r,M_{s+1}\})=
\left\{ \begin{array}{l l}
\Theta(n^{2s-2}) & \textrm{if\/ $r=2$},\\
\Theta(n^{s(r-1)-1}) & \textrm{if\/ $t>s(r-1)$ and $r>2$},\\
\Theta(n^{s(r-1)}) & \textrm{if\/ $t=s(r-1)$},\\
\Theta(n^{t+s-\lceil (t-s)/(r-2)\rceil}) & \textrm{otherwise}.\\
\end{array}
\right.
\end{displaymath}

  \textbf{(iv)}  If $F$ is a star $S_r$ with $1\le m\le s-1$ isolated edges added, then 
  \begin{displaymath}
\ex_t^\prod(n,\{F,M_{s+1}\})=
\left\{ \begin{array}{l l}
\Theta(n^{2s-2}) & \textrm{if\/ $r=2$},\\
\Theta(n^{t+s-1}) & \textrm{if\/ $r-1\ge t-s+1$},\\
\Theta(n^{t+m-1}) & \textrm{if\/ $t\ge s(r-1)$},\\
\Theta(n^{t+m-1}) & \textrm{if\/ $r+s-2<t<s(r-1)$ and $m>\frac{s(r-1)-t}{r-2}$}.\\
\Theta(n^{t+\lfloor \frac{s(r-1)-t}{r-2}\rfloor})  & \textrm{if\/ $r+s-2<t<s(r-1)$ and $m\le\frac{s(r-1)-t}{r-2}$}.\\
\end{array}
\right.
\end{displaymath}
\end{thm*}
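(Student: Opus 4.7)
The plan is to prove the four parts of Theorem~\ref{prod} in order, using Lemma~\ref{nestlemma} (nesting), Proposition~\ref{meshu}, Lemma~\ref{strong}, and Proposition~\ref{starprop} as the main tools. Lower bounds always come from explicit constructions; upper bounds reduce to the nested case and then analyze structure.

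For \textbf{(i)}, the lower bound takes $G_1=\cdots=G_{s-1}=K_n$ together with $G_s=\cdots=G_t=S_{n-1}$ centered at a common vertex $v$: any rainbow matching uses at most one edge through $v$, hence has size at most $s$. For the upper bound, after applying Lemma~\ref{nestlemma} to assume $G_1\supseteq\cdots\supseteq G_t$, the graph $G_{s+1}$ must be $M_{s+1}$-free (an $M_{s+1}$ in $G_{s+1}$ admits the SDR $e_j\mapsto j$ for $j=1,\dots,s+1$). The key step is to show $\prod_{i\ge s}|E(G_i)|\le(n-1)^{t-s+1}$: I would split on whether $G_s$ contains $M_2$. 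If not, $G_s$ is a star and every later $G_i\subseteq G_s$ is a sub-star on the same center, giving $|E(G_i)|\le n-1$; if $G_s$ does contain $M_2$, then combining this $M_2$ with $s-1$ disjoint edges from $G_1\setminus G_s$ and any edge of $G_{s+1}$ disjoint from it yields a rainbow $M_{s+1}$, so $G_{s+1},\dots,G_t$ must be concentrated around that $M_2$, making their total product $O(1)$. Part \textbf{(ii)} then follows since the same construction is rainbow $F$-free whenever $F$ is not a star with isolated edges added (any rainbow $F$ would require an edge independent of the shared star at $v$, ruled out by the hypothesis and noted in the remark just after the theorem), and the upper bound transfers from (i).

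For \textbf{(iii)}, I run a case analysis based on the relation between $t$ and $s(r-1)$. When $t\ge s(r-1)$, the construction uses $s$ star centers, each carrying $r-1$ colors as dense rainbow stars: this avoids rainbow $S_r$ (at most $r-1$ colors per vertex) and rainbow $M_{s+1}$ (only $s$ centers available). In the intermediate regime $r+s-2<t<s(r-1)$, the construction balances how many of the $r-1$ dense colors fit at each center to produce the exponent $t+s-\lceil(t-s)/(r-2)\rceil$. For the upper bound, Proposition~\ref{starprop} at each vertex bounds by $r-1$ the number of colors in which that vertex is star-dense; Lemma~\ref{greed} then caps the total dense vertex-color incidence count (otherwise one assembles a rainbow $M_{s+1}$ of dense rainbow stars); and a counting argument converts these constraints into the product bound.

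Part \textbf{(iv)} extends (iii) via the parameter $m$: a rainbow $S_r$ is allowed at a vertex provided no rainbow matching of size $m$ sits disjoint from it. The constructions combine a star block at a single vertex with an auxiliary block whose rainbow matching number is at most $m-1$; the case analysis depends on whether the matching constraint (threshold $m>(s(r-1)-t)/(r-2)$) or the star constraint is binding. The main obstacle throughout (iii) and (iv) is the tight upper bound in the intermediate regimes: this requires partitioning the $t$ colors into ``star-heavy'' and ``sparse'' classes, bounding each via Proposition~\ref{starprop} and Lemma~\ref{strong} respectively, and then optimizing the resulting product to extract the exact exponent. A secondary challenge will be verifying that each of the many lower-bound constructions is genuinely rainbow $\{F,M_{s+1}\}$-free, which requires delicate edge-counting in each sub-case.
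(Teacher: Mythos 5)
Your plan for part \textbf{(i)} starts from a wrong tool: Lemma~\ref{nestlemma} preserves, for each pair $u,v$, the number of colors containing the edge $uv$ (hence the \emph{sum} $\sum_i|E(G_i)|$), but it does \emph{not} preserve or increase the product. Writing $a=|G_i\cap G_j|$, $b=|G_i\setminus G_j|$, $c=|G_j\setminus G_i|$, the nesting step replaces $|G_i|\,|G_j|=(a+b)(a+c)$ by $a(a+b+c)$, which is smaller by $bc$. So you cannot assume WLOG that the collection is nested when bounding $\ex_t^\prod$: if you analyze the nested collection, you have only bounded a quantity that is \emph{at most} the original product, which is useless for an upper bound. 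The paper instead uses the notion of \emph{strong colors} from Lemma~\ref{strong}: at most $s-1$ colors can be strong (otherwise one extends any edge to a rainbow $M_{s+1}$), non-strong colors have $O(n)$ edges by Lemma~\ref{strong}(i), and then Proposition~\ref{m2} forces each of the remaining colors to be a star with at most $n-1$ edges. Your split on whether $G_s$ contains $M_2$ and your claim that "$G_{s+1},\dots,G_t$ must be concentrated around that $M_2$, making their total product $O(1)$" are also left unjustified even under the (invalid) nesting assumption.

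Your claim for \textbf{(ii)} that "the same construction is rainbow $F$-free whenever $F$ is not a star with isolated edges added" is simply false: the construction from (i) contains $K_n$ in $s-1$ colors, hence if $|E(F)|\le s-1$ it contains a rainbow copy of \emph{every} graph $F$. The paper uses a different construction here (a small monochromatic clique $K_{\lfloor n/2s\rfloor}$ in each of $s-1$ colors, plus stars on the remaining vertices in all colors), which is rainbow $F$-free because any rainbow subgraph that is not a star-plus-isolated-edges must contain two adjacent edges outside the shared star, forcing them into a single small clique of a single color. Parts \textbf{(iii)} and \textbf{(iv)} are only a sketch of a plan, not a proof: the key tools in the paper are the Chv\'atal--Hanson bound on $\ex(n,\{S_q,M_{2s+1}\})$ (which converts ``not containing $M_{2s+1}$ and having many edges'' into ``has a vertex of high degree''), the classification of colors into strong/medium/other and the rainbow-matching greedy assembled from medium colors, and for (iv) the modified notion of \emph{very strong} colors that must avoid a rainbow $S_r$ together with $m-1$ isolated edges. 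None of these ideas appear in your outline, and without them the ``counting argument converts these constraints into the product bound'' step cannot be carried out.
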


\begin{proof}
    The lower bound in \textbf{(i)} is given by the complete graph in $s-1$ colors and a star in the rest of the colors. For the upper bound, consider a rainbow $M_{s+1}$-free collection. If there is a color with no edges, then the product is zero and we are done. If there are $s$ strong colors, then they extend any edge in any other color to a rainbow $M_{s+1}$, a contradiction. Thus, there are at most $s-1$ strong colors. By \textbf{(i)} of Lemma~\ref{strong}, there are $O(n)$ edges in the colors that are not strong, thus if there are less than $s-1$ strong colors, we obtain an upper bound $O(n^{t+s-2})$ and we are done. Therefore, we have exactly $s-1$ strong colors. The rest of the colors most form a rainbow $M_2$-free collection, thus by Proposition~\ref{m2}, either there is a $G_i$ with at most 4 edges (giving us an upper bound $O(n^{t+s-2})$), or each color is a star, thus has at most $n-1$ edges. We obtained that $s-1$ colors have at most $\binom{n}{2}$ edges and the rest of the colors have at most $n-1$ edges, completing the proof of \textbf{(i)}.

    The lower bound of \textbf{(ii)} follows by taking a monocolored clique of order $\lfloor n/2s\rfloor$ in $s-1$ colors and a star in all colors on the rest of the vertices. The upper bound follows from \textbf{(i)}.

  Let us continue with \textbf{(iii)}.
Let $\ell=\lfloor n/st\rfloor$ and $k=\lceil (t-s)/(r-2)\rceil$. For the lower bound if $r=2$, we take $s-1$ copies of $K_\ell$ and independent edges in the other colors.
For the lower bound if $t>s(r-1)$ and $r>2$, let us take $s-1$ copies of $S_\ell$, each in $r-1$ distinct colors, and a copy of $S_\ell$ where each edge has $r-2$ colors, and one edge has the remaining $t-s(r-1)+1$ colors. For the lower bound if $t=s(r-1)$, let us take $s$ copies of $S_\ell$, each in $r-1$ distinct colors. For the lower bound if $t<s(r-1)$, let us take $s-k$ monochromatic copies of $K_{\ell+1}$ in distinct colors, $k-1$ copies of $S_\ell$, each in $r-1$ distinct colors, and a copy of $S_\ell$ in the remaining $t-(s-k)-(k-1)(r-1)$ colors.

Let us continue with the upper bound. Let $q$ be a sufficiently large integer.
A theorem of Chv{\'a}tal and Hanson \cite{ch} determines $\ex(n,\{S_q,M_{s+1}\})$ for all values of the parameters. We will only use that $\ex(n,\{S_q,M_{2s+1}\})\le q'=q'(s,q)$. 
We say that a non-strong color is \textit{medium} if it has at least $q'$ edges. By \textbf{(iii)} of Lemma \ref{strong}, such a color $i$ does not contain $M_{2s+1}$. Then by the theorem of Chv{\'a}tal and Hanson, $G_i$ contains $S_q$, i.e., each medium color has a vertex of degree at least $q$. Let us pick such a vertex for each medium color. Assume that there are $m$ medium colors and let $v_1,\dots,v_m$ be the vertices picked. Note that some of them may coincide.  Observe that a vertex has degree at least $q$ in at most $r-1$ colors, otherwise we find a rainbow $S_r$. Therefore, there are at least $\lceil m/(r-1)\rceil$ distinct vertices picked, say $v_1,\dots, v_{\lceil m/(r-1)\rceil}$. Then we claim that we can find a rainbow matching of size $\lceil m/(r-1)\rceil$ using only medium colors. For each medium color, we picked a vertex $v_i$; we say that the color \textit{belongs} to $v_i$. We go through the vertices $v_i$, $i\le \lceil m/(r-1)\rceil$ and each time we pick an edge incident to $v_i$, of a color that belongs to $v_i$. We pick the other endpoint such that it is not among the $v_i$, nor among the other endpoints picked earlier. This is doable since we have at least $q$ choices, and we have to avoid at most $2\lceil m/(r-1)\rceil-2$ vertices. This way we found a matching, and it is rainbow because each color belongs to only one vertex.


This implies that there are at most $s-\lceil m/(r-1)\rceil$ strong colors if $r>2$. Each strong color has $O(n^2)$ edges, each medium color has $O(n)$ edges, and other colors have $O(1)$ edges. Therefore, the product is $O(n^{m+2(s-\lceil m/(r-1)\rceil)})$. Observe that $\lceil m/(r-1)\rceil\le s$. Simple algebra implies the upper bound $O(n^{s(r-1)})$ if $r>2$. If $r=2$, there is no edge that is incident to $v_i$ in each weak color, otherwise, we can find a rainbow $S_2$. This implies that there are at most $s-m-1$ strong colors. Therefore, the product is $O(n^{2s-2})$.


Moreover, this is sharp only if we have exactly $s$ distinct vertices picked, and $r-1$ colors belong to each of them. Then if $t>s(r-1)$, we have another color. If an edge of that color is incident to at least one of $v_1,\dots,v_s$, then we find a rainbow $S_r$, otherwise we find a rainbow $M_{s+1}$, a contradiction. Therefore, $m+2(s-\lceil m/(r-1)\rceil)<s(r-1)$, but it is an integer, thus at most $s(r-1)-1$, completing the proof in this case. 

In the case $t<s(r-1)$, let $t'$ denote the number of medium and strong colors. We will use the simple upper bound $m\le (r-1)\lceil m/(r-1)\rceil$. We know that $t'\le\lceil m/(r-1)\rceil(r-1)+s-\lceil m/(r-1)\rceil$, thus simple algebra gives $\lceil m/(r-1)\rceil\ge k$. Then the product is $O(n^{m+2(s-\lceil m/(r-1)\rceil)})=O(n^{t'+s-\lceil m/(r-1)\rceil})=O(n^{t+s-k})$. 

    Let us continue with \textbf{(iv)}. The lower bound if $r=2$ follows from \textbf{(iii)}. If $r-1\ge t-s+1$, the lower bound is given by taking a star $S_{n-1}$ with center $v$ in $t-s+1$ colors, and 
    a monochromatic copy of $K_{\lfloor (n-1)/(s-1)\rfloor}$ in the other $s-1$ colors. The upper bound follows from \textbf{(i)}. 

    For other values of the parameters, the lower bound $\Omega(n^{t+m-1})$ is obtained by taking a star $S_\ell$ with center $v$ in $t-m+1$ colors, and 
    a monochromatic copy of $K_\ell$ in the other $m-1$ colors. The other lower bound is obtained by taking $K_\ell$ in $\lfloor \frac{s(r-1)-t}{r-2}\rfloor$ colors, and an $\{S_r,M_{s+1-\lfloor \frac{s(r-1)-t}{r-2}\rfloor}\}$-free collection in the other colors, given by the constructions from \textbf{(iii)}.

       For the upper bound, we need to modify the definition of strong color a little bit. We say that a color $i$ is \textit{very strong} if for any rainbow copy of $S_r$ plus at most $m-1$ isolated edges, there is an edge of color $i$ that does not contain any vertex of that copy of $S_r$, nor any vertex of the isolated edges. We say that a color is \textit{weak} otherwise. One can show similarly to Lemma \ref{strong} that weak colors have $O(n)$ edges. If we have $p<m$ very strong colors, then we have $p$ colors with $O(n^2)$ edges and $t-p$ colors with $O(n)$ edges, giving us the upper bound $O(n^p)=O(n^{t+m-1})$.

       Assume now that we have $p\ge m$ very strong colors. Then the other $t-p$ colors must avoid rainbow $M_{s+1-p}$ and rainbow $S_r$. Then we can apply \textbf{(iii)}, and have the upper bound

    \begin{displaymath}
\ex_t^\prod(n,\{F,M_{s+1}\})\le n^{2p}\ex_{t-p}^\prod(n,\{S_r,M_{s+1-p}\})=
\left\{ \begin{array}{l l}
\Theta(n^{2s-2}) & \textrm{if\/ $r=2$},\\
\Theta(n^{(s-p)(r-1)-1+2p}) & \textrm{if\/ $t-p>(s-p)(r-1)$},\\
\Theta(n^{(s-p)(r-1)+2p}) & \textrm{if\/ $t-p=(s-p)(r-1)$},\\
\Theta(n^{t+s-\lceil (t-s)/(r-2)\rceil}) & \textrm{otherwise}.\\
\end{array}
\right.
\end{displaymath}

If $r=2$, then we are done.
Observe that if $r>2$, then an upper bound of the form $O(n^{s(r-1)-1-p(r-3)})$ is maximized if $p$ is minimized, i.e., $p=m$. Simple algebra completes the proof in this case. 
If $m>\frac{s(r-1)-t}{r-2}$, then $p\ge m$ implies that $t-p>(s-p)(r-1)$. Therefore, in this case we have the upper bound $O(n^{s(r-1)-1-p(r-3)})$.

If $m\le\frac{s(r-1)-t}{r-2}$, then $p\ge m$ implies that $t-p\le (s-p)(r-1)$. If $\frac{s(r-1)-t}{r-2}$ is an integer, then this may be equal to $p$, which would give the upper bound $O(n^{t+\frac{s(r-1)-t}{r-2}})$ and we are done. Therefore, no matter whether $\frac{s(r-1)-t}{r-2}$ is an integer, we are left with the other bounds $O(n^{(s-p)(r-1)-1+2p})$ and $O(n^{t+s-\lceil (t-s)/(r-2)\rceil})$. We have already dealt with the first case. Observe that $(t-s)/(r-2)=s-\frac{s(r-1)-t}{r-2}$. Therefore, the second bound is equal to $t+\lfloor\frac{s(r-1)-t}{r-2}\rfloor$.
\end{proof}

\section{Concluding remarks}
We have proved that $\ex_t^\sum(n,\{F,M_{s+1}\})=\ex_s^\sum(n,F)+O(n)$, and in the case $F=K_3$, we managed to get rid of the linear additive error term. 
We note that $\ex_t^\sum(n,\{F,M_{s+1}\})=\ex_s^\sum(n,F)$ trivially holds for many $t,n,F,s$. In particular, this is the case if $\ex_t^\sum(n,F)=(|E(F)|-1)\binom{n}{2}$ and $n$ is sufficiently large. Since we have $\ex_t^\sum(n,M_{s+1})=s\binom{n}{2}$ by Proposition \ref{proppi} below, we have that $\ex_t^\sum(n,\{F,M_{s+1}\})\le q\binom{n}{2}$, where $q=\min\{|E(F)|-1,s\}$. Since $K_n$ in $q$ colors do not contain rainbow $F$, nor rainbow $M_{s+1}$, indeed we have $\ex_t^\sum(n,\{F,M_{s+1}\})=\ex_s^\sum(n,F)$. 

\begin{proposition}\label{proppi}
    Let $F$ be a bipartite graph and $t\ge |E(F)|$, then for sufficiently large $n$ we have $\ex_t^\sum(n,F)=(|E(F)|-1)\binom{n}{2}$.
\end{proposition}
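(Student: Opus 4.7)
The lower bound will come from the standard construction: take $G_i = K_n$ for $1 \le i \le m-1$ (with $m = |E(F)|$) and $G_i = \emptyset$ for $m \le i \le t$. Only $m-1$ colours carry edges, so no rainbow copy of $F$ can arise, and the total edge count is exactly $(m-1)\binom{n}{2}$.

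For the upper bound, I first apply Lemma~\ref{nestlemma} to reduce any rainbow $F$-free collection to a nested one $G_1 \supseteq G_2 \supseteq \cdots \supseteq G_t$ with the same sum. I then argue that $G_m$ must be $F$-free: any copy of $F$ inside $G_m$ lies, by nesting, in each $G_i$ with $i \le m$, and one can assign its $m$ edges the distinct colours $1, \ldots, m$ to obtain a rainbow $F$. Since $F$ is bipartite, Kővári--Sós--Turán gives $|E(G_m)| \le \ex(n,F) = o(n^2)$, and by nesting the same bound holds for every $G_j$ with $j \ge m$. In the case $G_m = \emptyset$ the bound $\sum_i |E(G_i)| \le (m-1)\binom{n}{2}$ is immediate and matches the lower bound.

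The crux is the case $G_m \ne \emptyset$. I rewrite the sum as $(m-1)\binom{n}{2} - D + E$, with ``deficit'' $D = \sum_{i<m}(\binom{n}{2} - |E(G_i)|)$ and ``excess'' $E = \sum_{j \ge m}|E(G_j)| \le (t-m+1)\ex(n,F) = o(n^2)$; the target then becomes $D \ge E$. The key SDR observation is: for every edge $uv \in G_m$, $G_{m-1}$ cannot contain any copy of $F$ through $uv$, since the $m$ sorted edge-depths of such a copy would all be at least $m-1$ with that of $uv$ being at least $m$, producing the valid SDR $d_i \ge i$ and hence a rainbow $F$.

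The main obstacle is converting this structural constraint into the quantitative bound $\binom{n}{2} - |E(G_{m-1})| \ge E$. My approach is a supersaturation count: the number of embeddings of $F$ into $K_n$ through a fixed edge is $\Theta(n^{|V(F)|-2})$, while each missing edge destroys only $O(n^{|V(F)|-4})$ of these embeddings (by union bound over which edge of $F$ the missing one plays). Hence any graph on $n$ vertices missing only $o(n^2)$ edges contains $F$ through every edge it carries, provided $n$ is sufficiently large. Since $uv \in G_m \subseteq G_{m-1}$ yet $G_{m-1}$ has no $F$ through $uv$, the contrapositive forces $\binom{n}{2} - |E(G_{m-1})| = \Omega(n^2)$, which in particular exceeds $E = o(n^2)$ for $n$ large. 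Finally $D \ge \binom{n}{2} - |E(G_{m-1})|$ (the $i = m-1$ summand of $D$ alone, all summands being non-negative), giving $D \ge E$ and thus $\sum_i |E(G_i)| \le (m-1)\binom{n}{2}$, as needed.
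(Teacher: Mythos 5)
Your lower bound, the reduction to a nested collection, the observation that $G_m$ is $F$-free, and the key structural claim that $G_{m-1}$ contains no copy of $F$ through any edge of $G_m$ (your ``sorted depth / SDR'' argument is the same greedy colour-assignment the paper uses to show that every edge of a $G_i$ with $i>|E(F)|-1$ lies in the set $B$ of $G_f$-edges contained in no copy of $F$ inside $G_f$) are all correct, and up to that point you follow the same strategy as the paper.

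The gap is in the quantitative step. The supersaturation claim --- that a graph on $n$ vertices missing only $o(n^2)$ edges contains $F$ through every edge it carries --- is false. For $F=C_4$, let $G$ be $K_n$ with every edge at $u$ deleted except $uv$; then $G$ misses only $n-2=o(n^2)$ edges, but $u$ has degree $1$ and so lies in no $C_4$, hence there is no $C_4$ through $uv$. The error is in the union bound: a missing edge incident to $u$ or $v$ destroys $\Theta(n^{|V(F)|-3})$, not $O(n^{|V(F)|-4})$, of the embeddings through $uv$, and there may be $\Theta(n)$ such missing edges, contributing $\Theta(n^{|V(F)|-2})$ --- the same order as the total count. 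Consequently $\binom{n}{2}-|E(G_{m-1})|=\Omega(n^2)$ does not follow, and it can genuinely fail: the example above extends to a rainbow $C_4$-free nested collection (take $G_1=G_2=G_3=G$ and $G_4=\dots=G_t=\{uv\}$) with $G_m\ne\emptyset$ yet $G_{m-1}$ missing only $\Theta(n)$ edges. So the inequality $D\ge E$ is not established.

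The paper sidesteps this by never trying to show $|A|:=\binom{n}{2}-|E(G_f)|=\Omega(n^2)$; it shows $|B|=o(|A|)$, which suffices because $E\le t|B|$ while $D\ge|A|$. Your counting is exactly one half of that argument: it shows each $uv\in B$ forces $\Omega(n)$ non-edges of $G_f$ incident to $u$ or $v$. The missing half is a degree bound on $B$: if some vertex $w$ were incident to $\Theta(n)$ edges of $B$, then with $W$ the set of their other endpoints, $G_f[W]$ would have to avoid $F$ minus a vertex (else one extends through a $B$-edge at $w$ to a copy of $F$ in $G_f$), forcing $\Theta(n^2)$ non-edges inside $W$ and contradicting the WLOG reduction $|A|=o(n^2)$. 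Double-counting vertex-sharing pairs $(e,a)\in B\times A$ then gives $|B|=o(|A|)$ and hence $D\ge|A|>t|B|\ge E$. Without that second half, your accounting does not close.
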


Note that \cite{kssv} focused on the case $t$ increases with $n$, which may be a reason why they did not state this simple result.

\begin{proof}
    Let $(G_1,\dots,G_t)$ be a rainbow $F$-free collection with vertex set $V$ and the largest sum of edges. We can assume by Lemma \ref{nestlemma} that $G_1\supseteq G_2\supseteq \dots \supseteq G_t$. Let $f:=|E(F)|-1$. Then for $i>f$, $G_i$ is $F$-free, thus $|E(G_i)|=o(n^2)$ by the K\H ov\'ari-S\'os-Tur\'an theorem \cite{KST}. Therefore, there are $o(n^2)$ pairs $u,v$ such that $uv\not \in E(G_f)$, we say that $uv$ is a non-edge. Let $A$ denote the set of non-edges and $B$ denote the set of edges in $G_f$ that are not contained in any copy of $F$ in $G_f$. Observe that for every $i>f$, every edge of $G_i$ must belong to $B$.

    Let $uv\in B$. There are $\Theta(n^{f-1})$ copies of $F$ containing $uv$ in the complete graph on $V$, and each of those copies contains a non-edge. A non-edge that contain neither $u$, nor $v$ is in $O(n^{f-3})$ copies of $F$ together with $uv$ in the complete graph, thus $o(n^{f-1})$ copies of $F$ in the complete graph contain at least one of the $o(n^2)$ such non-edges. A non-edge that contains $u$ or $v$ is in $O(n^{f-2})$ copies of $F$ together with $uv$ in the complete graph, thus we must have $\Theta(n)$ non-edges containing $u$ or $v$. 

    If a vertex $w$ is incident to $\Theta(n)$ edges in $B$, then let $W$ consist of the other endpoints of these edges. Let $F'$ denote a graph obtained from $F$ by deleting a vertex, then $G_f[W]$ is $F'$-free, thus has $o(|W|^2)$ edges. But this means that there are $\Theta(|W|^2)=\Theta(n^2)$ non-edges, a contradiction. We obtained that each vertex is incident to $o(n)$ edges in $B$. For each edge of $A$, we have found $\Theta(n)$ edges of $B$, and each edge of $B$ is counted $o(n)$ times, thus $|B|=o(|A|)$. Since there are at most $t|B|$ edges altogether in the graphs $G_i$ with $i>f$, we have at most $f\binom{n}{2}-|A|+t|B|<f\binom{n}{2}$ edges in the collection, completing the proof.
\end{proof}

\bigskip
\textbf{Funding}: 

The research of Gerbner is supported by the National Research, Development and Innovation Office - NKFIH under the grant KKP-133819.

The research of Miao is supported by the China Scholarship Council (No. 202406770056).

\end{document}